\numberwithin{equation}{section}
\newtheorem{theorem}{Theorem}[section]
\newtheorem{proposition}[theorem]{Proposition}
\newtheorem{lemma}[theorem]{Lemma}
\begin{document}
\title[Effect of hyperviscosity on turbulence.]{Effect of hyperviscosity on the Navier-Stokes turbulence.}
\author{Abdelhafid Younsi.}
\address{Faculty of Mathematics USTHB, BP32 EL ALIA16111 Algiers, Algeria.}
\email{younsihafid@gmail.com}
\subjclass[2000]{ 76D05, 76F20, 35B30,\ 35B41, 35B65, 37L30, 37K40}
\keywords{Navier-Stokes equations,\thinspace Hyperviscosity, Weak solutions, Attractor
dimension,\ Turbulence models}

\begin{abstract}
In this paper we modified the Navier-Stokes equations by adding a higher order
artificial viscosity term to the conventional system. We first show that the
solution of the regularized system converges strongly to the solution of the
conventional system as the regularization parameter goes to zero, for each
dimension $d\leq4$. Then we show that the use of this artificial viscosity
term leads to truncated the number of degrees of freedom in the long-time
behavior of the solutions to these equations. This result suggests that the
hyperviscous Navier-Stokes system is an interesting model for
three-dimensional fluid turbulence.

\end{abstract}
\maketitle

\section{Introduction}

We regularize the Navier-Stokes equations by adding a higher-order viscosity
term to the conventional system. In this paper we will restrict ourselves to
periodic boundary conditions.%
\begin{equation}%
\begin{array}
[c]{c}%
\dfrac{du_{\varepsilon}}{dt}+\varepsilon\left(  -\triangle\right)
^{l}u_{\varepsilon}-\nu\triangle u_{\varepsilon}+\left(  u_{\varepsilon
}.\nabla\right)  u_{\varepsilon}+\nabla p=f\left(  x\right)  ,\text{ in
}\Omega\times\left(  0,\infty\right) \\
\text{div }u_{\varepsilon}=0,\text{ in }\Omega\times\left(  0,\infty\right)
\text{, }\\
p(x+Le_{i},t)=p(x,t),\text{ }u(x+Le_{i},t)=u(x,t)\text{\ \ }i=1,...,d\text{
\ }t\in\left(  0,\infty\right) \\
u_{\varepsilon}\left(  x,0\right)  =u_{\varepsilon0}\left(  x\right)
,\text{in }\Omega\text{,}%
\end{array}
\label{1}%
\end{equation}
Where $\Omega=\left(  0,L\right)  ^{d}$ and $\left(  e_{1},...,e_{d}\right)  $
is the natural basis of $%
\mathbb{R}
^{d}$. Here $\varepsilon>0$ is the artificial dissipation parameter and
$\nu>0$ is the kinematic viscosity of the fluid, $l>1$. The function
$u_{\varepsilon}\ $is the velocity vector field, $p$ is the pressure, and $f$
is a given force field. For $\varepsilon=0$, the model is reduced to the
Navier-Stokes system.

In Lions \cite{25}, the existence and uniqueness of weak solutions of the
modified Navier-Stokes equations were established for all $l>0$ if $l\geq
\frac{d+2}{4}$, $d$ is the space dimension.

This type of regularization was proposed by Ladyzhenskaya \cite{20} and Lions
\cite{26} who added the artificial hyperviscosity $\left(  -\triangle\right)
^{\frac{l}{2}}$, $l>2$ to the Navier-Stokes system.

Mathematical model for such fluid motion play an important role in theoretical
and computational studies of bipolar fluids \cite{7} and in the regularized
Navier-Stokes equations (see \cite{7, 26, 28} and the references therein).
Hyperviscosity has been widely used for numerical simulations of turbulence
\cite{1,3,5,6} and in computer simulations for oceanic and atmospheric flows
(see \cite{4, 23}) or to control the Navier--Stokes equations \cite{31}.

A well known example of such a result is the viscosity solution method for the
Hamilton-Jacobi equations \cite{26}.

In this paper, we will study the effect of hyperviscosity on the Navier-Stokes
turbulence. First, we show that the solutions of $($\ref{1}$)$ converge
strongly to the corresponding solutions of the Navier--Stokes equations for
$d\leq4$. This result can extend to each domain $\Omega$ with one finite size.

In this result, we show that the conjecture of J.Lions \cite[Remarque 8.2.
SecII]{25} is true, for $d\leq4$. In addition, it is an extension of a result
due to Lions \cite{26} (where only the weak convergence is proved). The
results in this article can be seen as an improved version of the
convergence\ results announced by Yuh-Roung and Sritharan \cite{28,29}, in two
different ways: On the one hand, we consider here a dimension $d\leq4$, on the
other hand the order viscosity term here is $l\geq\sup(\frac{d}{2},\frac
{d+2}{4})$.

Next, we consider the system $($\ref{1}$)$ with $l=2$ i.e. we modified the
$3D$ Navier-Stokes system by adding a fourth order artificial viscosity term
(Laplacian square) and we show the existence of absorbing sets. This fact
implies that the system ($l=2$) possesses a global attractor $\mathfrak{A}%
_{\varepsilon}$.

Finally, we obtain scale-invariant estimates on the Hausdorff and fractal
dimensions of the global attractor $\mathfrak{A}_{\varepsilon}$ independent of
$\varepsilon$ in terms of the Landau--Lifschitz theory \cite{22} of the number
of degrees of freedom in turbulent flow \cite{11, 32}. In fact such an
estimate that improves on the Landau-Lifschitz estimates has already been done
by J. Avrin \cite{1} in which hyperviscous terms are spectrally added to the
Navier-Stokes equations.

Thus we recover the improvement on the cubic power, i.e. get a bound
proportional to $G^{\frac{p}{2}}$ for $p<3$. The latter should be a
possibility, as the attractor results in \cite{1} were not intended to be
optimal in this direction. We would then represent an overlapping result that
is new as far as we know, although readers familiar with the attractor
techniques used may anticipate that such a result is possible in the
hyperviscous case given the existing results in \cite{1} and the expected
improvement in the Sobolev-space estimates in the fixed uniform hyperviscous
case at hand.

In Section 2, we present the relevant mathematical framework for the paper. In
Section 3, we show the convergence of the system $($\ref{1}$)$ to the
conventional Navier--Stokes equations. In Section 4, we consider the
hyperviscous system ($l=2$), we show the existence of a global attractor. In
Section 5, we estimate the dimension of the attractor. Finally, we provide in
Section 6, explicit upper bounds for the dimension of the global attractor of
the modified Navier--Stokes in terms of the relevant physical parameters.

\section{Notations and preliminaries}

In this section we introduce notations and the definitions of standard
functional spaces that will be used throughout the paper. We denote by
$H^{m}\left(  \Omega\right)  $, the Sobolev space of $L$-periodic functions.
These spaces are endowed with the inner product%
\[
\left(  u,v\right)  =%
{\textstyle\sum\limits_{\left\vert \beta\right\vert \leq m}}
(D^{\beta}u,D^{\beta}v)_{L^{2}\left(  \Omega\right)  }%
\]
and the norm%
\[
\left\Vert u\right\Vert _{m}=%
{\textstyle\sum\limits_{\left\vert \beta\right\vert \leq m}}
(\left\Vert D^{\beta}u\right\Vert _{L^{2}\left(  \Omega\right)  }^{2}%
)^{\frac{1}{2}}.
\]
$H^{-m}\left(  \Omega\right)  $ Denote the dual space of $H^{m}\left(
\Omega\right)  $.

We denote by $\dot{H}^{m}\left(  \Omega\right)  $ the subspace of
$H^{m}\left(  \Omega\right)  $ with, zero average%

\[
\dot{H}^{m}\left(  \Omega\right)  =\{u\in H^{m}\left(  \Omega\right)  ;\text{
}%
{\displaystyle\int\limits_{\Omega}}
u\left(  x\right)  dx=0\}.
\]
For $m=0$, we have $\dot{H}^{m}\left(  \Omega\right)  =\dot{L}^{2}\left(
\Omega\right)  $.

\begin{itemize}
\item We introduce the following solenoidal subspaces $V_{s},$ $s\in%
\mathbb{R}
^{+}$ which are important to our analysis
\end{itemize}

the spaces $V_{s}$ as completions of smooth, divergence-free, periodic,
zero-average functions with respect to the $H^{s}$ norms.%
\begin{align*}
V_{0}\left(  \Omega\right)   &  =\{u\in\dot{L}^{2}\left(  \Omega\right)
,\text{div}u=0,u.n\mid_{\Sigma_{i}}=-u.n\mid_{\Sigma_{i+3}},i=1,2,3\};\\
V_{1}\left(  \Omega\right)   &  =\{u\in\dot{H}^{1}\left(  \Omega\right)
,\text{div}u=0,\gamma_{0}u\mid_{\Sigma_{i}}=\gamma_{0}u\mid_{\Sigma_{i+3}%
},i=1,2,3\}.\\
V_{2}\left(  \Omega\right)   &  =\{u\in\dot{H}^{2}\left(  \Omega\right)
,\text{div}u=0,\gamma_{0}u\mid_{\Sigma_{i}}=\gamma_{0}u\mid_{\Sigma_{i+3}%
},\gamma_{1}u\mid_{\Sigma_{i}}=-\gamma_{1}u\mid_{\Sigma_{i+3}},i=1,2,3\},
\end{align*}

see \cite[Chapter III, Section 2]{32}. We refer the reader to R.Temam
\cite{33} for details on these spaces. Here the faces of $\Omega$ are numbered
as%
\[
\Sigma_{i}=\partial\Omega\cap\left\{  x_{i}=0\right\}  \text{ and }%
\Sigma_{i+3}=\partial\Omega\cap\left\{  x_{i}=L\right\}  ,\text{ }i=1,2,3.
\]
Here $\gamma_{0}$, $\gamma_{1}$ are the trace operators and $n$ is the unit
outward normal on $\partial\Omega$.

\begin{itemize}
\item The space $V_{0}$ is endowed with the inner product $\left(  u,v\right)
_{L^{2}\left(  \Omega\right)  }$ and norm $\left\Vert u\right\Vert
=\nolinebreak\left(  u,u\right)  _{L^{2}\left(  \Omega\right)  }^{\frac{1}{2}%
}$.

\item $V_{1}$ Is the Hilbert space with the norm $\left\Vert u\right\Vert
_{1}=\left\Vert u\right\Vert _{V_{1}}$. The norm induced by $\dot{H}%
^{1}\left(  \Omega\right)  $\ and the norm $\left\Vert \nabla u\right\Vert
$\ are equivalent in $V_{1}$.

\item $V_{2}$ Is the Hilbert space with the norm $\left\Vert u\right\Vert
_{2}=\left\Vert u\right\Vert _{V_{2}}$. In $V_{2}$ the norm induced by
$\dot{H}^{2}\left(  \Omega\right)  $ is equivalent to the norm $\left\Vert
\triangle u\right\Vert $.
\end{itemize}

$V_{s}^{\prime}$ Denote the dual space of $V_{s}$. We denote by $A$ the Stokes
operator%
\[
Au=-\triangle u\text{ for }u\in D\left(  A\right)  .
\]
We recall that the operator $A$ is a closed positive self-adjoint unbounded
operator, with $D\left(  A\right)  =\left\{  u\in V_{0}\text{, }Au\in
V_{0}\right\}  $. We have in fact,
\[
D\left(  A\right)  =\dot{H}^{2}\left(  \Omega\right)  \cap V_{0}=V_{2}\text{.}%
\]
The eigenvalues of $A$ are $\left\{  \lambda_{j}\right\}  _{j=1}^{j=\infty}$,
$0$ $<$ $\lambda_{1}\leq\lambda_{2}\leq...$and the corresponding orthonormal
set of eigenfunctions $\left\{  w_{j}\right\}  _{j=1}^{j=\infty}$ is complete
in $V_{0}$%
\[
Aw_{j}=\lambda_{j}w_{j},\ \ w_{j}\in D(A_{1}),\forall j.
\]
The spectral theory of $A$ allows us to define the powers $A^{l}$ of $A$ for
$l\geq1$,\newline$A^{l}$ is an unbounded self-adjoint operator in $V_{0}$ with
a domain $D(A^{l})$ dense in $V_{2}\subset V_{0}$. We set here%
\[
A^{l}u=\left(  -\triangle\right)  ^{l}u\text{\ for }u\in D\left(
A^{l}\right)  =V_{2l}\cap V_{0}\text{.}%
\]
The space $D\left(  A^{l}\right)  $ is endowed with the scalar product and the
norm%
\[
\left(  u,v\right)  _{D(A^{l})}=\left(  A^{l}u,A^{l}v\right)  \text{,
}\left\Vert u\right\Vert _{D\left(  A^{l}\right)  }=\{\left(  u,v\right)
_{D\left(  A^{l}\right)  }\}^{\frac{1}{2}}.
\]
Let us now define the trilinear form $b(.,.,.)$ associated with the inertia
terms%
\[
b\left(  u,v,w\right)  =\sum_{i,j=1}^{3}%
{\displaystyle\int\limits_{\Omega}}
u_{i}\frac{\partial v_{j}}{\partial x_{_{i}}}w_{j}dx.
\]
The continuity property of the trilinear form enables us to define (using
Riesz representation Theorem) a bilinear continuous operator $B\left(
u,v\right)  $; $V_{2}\times V_{2}\rightarrow V_{2}^{\prime}$ will be defined
by
\begin{equation}
\left\langle B\left(  u,v\right)  ,w\right\rangle =b\left(  u,v,w\right)
,\text{ }\forall w\in V_{2}\text{.} \label{2}%
\end{equation}
Recall that for $u$ satisfying $\nabla.u=0$ we have%
\begin{equation}
b\left(  u,u,u\right)  =0\text{ and }b\left(  u,v,w\right)  =-b\left(
u,w,v\right)  \text{.} \label{3}%
\end{equation}
Hereafter, $c_{i}$ for $i\in%
\mathbb{N}
$, will denote a dimensionless scale invariant positive constant which might
depend on the shape of the domain. Similarly, the trilinear form $b\left(
u,v,w\right)  $ satisfies the well-known inequalities (see, for instance,
\cite[Lemma 61.1]{30} and \cite{8, 33})%
\begin{equation}
\left\vert b(u,v,u)\right\vert \leq c_{1}\left\Vert u\right\Vert ^{\frac{1}%
{2}}\left\Vert u\right\Vert _{1}^{\frac{3}{2}}\left\Vert v\right\Vert
_{1}\text{ for all }u,v\in V. \label{4}%
\end{equation}

The trilinear form $b\left(  .,.,.\right)  $ is continuous on $\dot{H}^{m_{1}%
}\left(  \Omega\right)  \times\dot{H}^{m_{2}+1}\left(  \Omega\right)
\times\dot{H}^{m_{3}}\left(  \Omega\right)  $, $m_{i}\geq0$
\begin{equation}
\left\vert b\left(  u,v,w\right)  \right\vert \leq c_{2}\left\Vert
u\right\Vert _{m_{1}}\left\Vert v\right\Vert _{m_{2}+1}\left\Vert w\right\Vert
_{m_{3}}\text{ \ },\text{ }m_{3}+m_{2}+m_{1}\geq\frac{3}{2} \label{5}%
\end{equation}
see \cite{21}. We recall some well known inequalities that we will be using in
what follows.

Agmon inequality (see, e.g., \cite{8})%
\begin{equation}
\left\Vert u\right\Vert _{\infty}\leq c_{3}\left\Vert u\right\Vert _{1}%
^{\frac{1}{2}}\left\Vert Au\right\Vert ^{\frac{1}{2}}\text{ for all }u\in
V_{2}. \label{6}%
\end{equation}
Young's inequality%
\begin{equation}
ab\leq\tfrac{\sigma}{p}a^{p}+\tfrac{1}{q\sigma^{\frac{q}{p}}}b^{q}%
,a,b,\sigma>0,\text{ }p>1,\text{ }q=\tfrac{p}{p-1}. \label{7}%
\end{equation}
Poincar\'{e} inequality%
\begin{equation}
\lambda_{1}\left\Vert u\right\Vert ^{2}\leq\Vert A^{\frac{1}{2}}u\Vert
^{2}\text{\ for all }u\in V\text{.} \label{8}%
\end{equation}
To prove uniform bounds on different norms we use the uniform Gronwall Lemma
for proof see \cite[Lemma III 1.1]{32}.

\begin{lemma}
(The Uniform Gronwall Lemma) Let $g$, $h$, $y$ be three positive locally
integrable functions on $\left(  t_{0,}+\infty\right)  $ which satisfy%
\[
\frac{dy}{dt}\leq gy+h\text{ for }t\geq t_{0}\text{ and }%
\]%
\[
\int_{t}^{t+r}g\left(  s\right)  ds\leq a_{1}\text{, }\int_{t}^{t+r}h\left(
s\right)  ds\leq a_{2}\text{, }\int_{t}^{t+r}y\left(  s\right)  ds\leq
a_{3}\text{\ for }t\geq t_{0},
\]
where $a_{1}$, $a_{2}$, $a_{3}$ are positive constants. Then%
\[
y\left(  t+r\right)  \leq\left(  \frac{a_{3}}{r}+a_{2}\right)  \exp\left(
a_{1}\right)  \text{ for }t\geq t_{0.}%
\]

\end{lemma}

Denoting by $G$ the dimensionless Grashoff number \cite{10}, $G=\dfrac
{\left\Vert f\right\Vert }{\nu^{2}\lambda_{1}^{\frac{3}{4}}}$ in 3D, (see e.g.
\cite{1,11, 33}). Tthis number measures the relative strength of the forcing
and viscosity.

\section{Strong convergence for the hyperviscous system}

In this Section, we give a new Theorem which insures the strong convergence of
the solutions of the system $($\ref{1}$)$ to the corresponding solutions of
the Navier--Stokes equations for $d\leq4$. This result can extend to each
domain $\Omega$ with one finite size. Moreover, we show that $u_{\varepsilon
}\in C\left(  0,T;V_{0}\right)  $.

Using the operators defined above, we can write the modified system $($%
\ref{1}$)$ in the evolution form%

\begin{align}
\dfrac{du_{\varepsilon}}{dt}+\varepsilon A^{l}u_{\varepsilon}+\nu
Au_{\varepsilon}+B\left(  u_{\varepsilon},u_{\varepsilon}\right)   &
=f\left(  x\right)  \text{, in }\Omega\times\left(  0,\infty\right)
\label{9}\\
u_{\varepsilon0}\left(  x\right)   &  =u_{\varepsilon0}\text{ \ ,\ in }%
\Omega\text{.} \label{10}%
\end{align}

The existence and uniqueness results for initial value problem $($\ref{1}$)$
can be found in \cite{25}, \cite[Chap.1, Remarque 6.11]{26}.\newline The
following theorem collects the main result in this work

\begin{theorem}
\label{Theorem1} For $l\geq\frac{d+2}{4}$, $d$ is the space dimension, for
$\varepsilon>0$ fixed, $f\in\nolinebreak L^{2}\left(  0,T;V_{0}^{\prime
}\right)  $ and $u_{\varepsilon0}\in V_{0}$ be given. There exists a unique
weak solution of $($\ref{1}$)$ which satisfies%
\[
u_{\varepsilon}\in L^{2}\left(  0,T;V_{l}\right)  \cap L^{\infty}\left(
0,T;V_{0}\right)  ,\forall T>0.
\]

\end{theorem}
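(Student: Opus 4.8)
The plan is to establish existence via the Faedo--Galerkin method and then uniqueness and continuity via energy estimates. First I would fix the spectral basis $\{w_j\}$ of eigenfunctions of the Stokes operator $A$ introduced above, let $P_N$ denote the orthogonal projection onto $\mathrm{span}\{w_1,\dots,w_N\}$ in $V_0$, and seek an approximate solution $u_\varepsilon^N(t)=\sum_{j=1}^N g_j^N(t)w_j$ solving the finite-dimensional ODE system obtained by testing \eqref{9} against each $w_j$, with $u_\varepsilon^N(0)=P_N u_{\varepsilon 0}$. Local existence of $g^N$ follows from the Cauchy--Peano/Picard theorem since the nonlinearity $B$ is locally Lipschitz on finite-dimensional spaces; global existence will follow once the a priori bounds are in hand.

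Next I would derive the basic energy estimate: taking the inner product of the Galerkin system with $u_\varepsilon^N$ and using $b(u_\varepsilon^N,u_\varepsilon^N,u_\varepsilon^N)=0$ from \eqref{3}, together with $\left(A^l u,u\right)=\|A^{l/2}u\|^2$ and $\left(Au,u\right)=\|A^{1/2}u\|^2$, one gets
\[
\tfrac12\tfrac{d}{dt}\|u_\varepsilon^N\|^2+\varepsilon\|A^{l/2}u_\varepsilon^N\|^2+\nu\|A^{1/2}u_\varepsilon^N\|^2=\langle f,u_\varepsilon^N\rangle.
\]
Bounding the right-hand side by $\|f\|_{V_0'}\|u_\varepsilon^N\|_{V_l}$ (or by $\|f\|_{V_0'}\|A^{1/2}u_\varepsilon^N\|$ via Poincar\'e) and absorbing with Young's inequality \eqref{7}, one integrates in $t$ to obtain, uniformly in $N$, that $u_\varepsilon^N$ is bounded in $L^\infty(0,T;V_0)$ and in $L^2(0,T;V_l)$. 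From the equation one then reads off a bound on $\tfrac{du_\varepsilon^N}{dt}$ in $L^{p}(0,T;V_l')$ for a suitable $p$ (estimating $\varepsilon A^l u_\varepsilon^N$ in $V_l'$, $\nu A u_\varepsilon^N$ in $V_l'$ since $l\geq 1$, and $B(u_\varepsilon^N,u_\varepsilon^N)$ in $V_l'$ using the continuity estimate \eqref{5} with the constraint $l\geq\frac{d+2}{4}$ ensuring $V_l\hookrightarrow L^\infty$ or at least the needed product estimate). The Aubin--Lions compactness lemma then gives a subsequence converging strongly in $L^2(0,T;V_0)$ and a.e., which suffices to pass to the limit in the nonlinear term $B(u_\varepsilon^N,u_\varepsilon^N)$ and identify the limit $u_\varepsilon$ as a weak solution; weak-$*$ limits handle the linear terms, and the initial condition is recovered since $u_\varepsilon\in C([0,T];V_l')$ and $u_\varepsilon^N(0)\to u_{\varepsilon 0}$.

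For uniqueness and the continuity $u_\varepsilon\in C([0,T];V_0)$, I would take two solutions $u,v$ with the same data, set $w=u-v$, and test the difference equation with $w$; using \eqref{3} to write $b(u,u,w)-b(v,v,w)=b(w,v,w)$ (plus $b(u,w,w)=0$), estimating $|b(w,v,w)|$ by \eqref{4} or \eqref{5}, and absorbing the critical term into $\varepsilon\|A^{l/2}w\|^2+\nu\|A^{1/2}w\|^2$ via Young's inequality, one arrives at a differential inequality $\tfrac{d}{dt}\|w\|^2\leq \phi(t)\|w\|^2$ with $\phi\in L^1(0,T)$, whence $w\equiv 0$ by Gronwall. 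The same test function applied to $u_\varepsilon$ itself shows $\tfrac{d}{dt}\|u_\varepsilon\|^2\in L^1(0,T)$, so $t\mapsto\|u_\varepsilon(t)\|^2$ is absolutely continuous; combined with weak continuity in $V_0$ this yields strong continuity $u_\varepsilon\in C([0,T];V_0)$ by the standard Lions--Magenes argument.

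The main obstacle I anticipate is controlling the nonlinear term uniformly in the Galerkin parameter in low-regularity norms and, crucially, verifying that the hypothesis $l\geq\frac{d+2}{4}$ is exactly what makes $B(u_\varepsilon,u_\varepsilon)$ belong to a space in which $\tfrac{du_\varepsilon}{dt}$ lives and in which the energy identity can be justified (the borderline case $l=\frac{d+2}{4}$, where $V_l\hookrightarrow L^\infty$ or the relevant Gagliardo--Nirenberg product estimate is critical rather than subcritical, requires care). Once the right functional-analytic setting for $B$ is pinned down, the rest is the standard Navier--Stokes-type machinery adapted to the hyperviscous principal part $\varepsilon A^l$.
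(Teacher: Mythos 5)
The paper offers no proof of this theorem at all: it is imported from Lions \cite{25} and \cite[Chap.~1, Remarque 6.11]{26}, and your Faedo--Galerkin scheme, energy bound in $L^{\infty}(0,T;V_{0})\cap L^{2}(0,T;V_{l})$, Aubin--Lions compactness, and Gronwall uniqueness argument is exactly the route those references take, so your skeleton is the intended one. There is, however, one concretely wrong step and one deferred crux, and they coincide. The mechanism you offer for controlling the nonlinearity --- that $l\geq\frac{d+2}{4}$ ensures $V_{l}\hookrightarrow L^{\infty}$ --- is false for $d\geq3$: that embedding requires $l>\frac{d}{2}$, and $\frac{d+2}{4}<\frac{d}{2}$ as soon as $d>2$ (in the paper's main case $d=3$ the claimed threshold is $l=\frac{5}{4}<\frac{3}{2}$). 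Since the entire content of the theorem is that $\frac{d+2}{4}$, rather than the easier $\frac{d}{2}$, suffices, the hedge ``or at least the needed product estimate'' is precisely the part that must be supplied, and as written your proof only covers the larger exponents.

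The bookkeeping that closes it is the following. Using the continuity estimate $(\ref{5})$ in the form $\left\vert b(w,w,v)\right\vert \leq c\left\Vert w\right\Vert _{s_{1}}\left\Vert w\right\Vert _{1}\left\Vert v\right\Vert _{s_{3}}$ with $s_{1}+s_{3}=\frac{d}{2}$ and $0\leq s_{i}\leq l$, interpolate $\left\Vert w\right\Vert _{s}\leq\left\Vert w\right\Vert ^{1-s/l}\Vert A^{l/2}w\Vert^{s/l}$ so that the right-hand side carries the factor $\Vert A^{l/2}w\Vert^{(s_{1}+1)/l}$; absorbing this into $\varepsilon\Vert A^{l/2}w\Vert^{2}$ by Young's inequality $(\ref{7})$ leaves a Gronwall coefficient proportional to $\left\Vert v\right\Vert _{s_{3}}^{2l/(2l-s_{1}-1)}$, which belongs to $L^{1}(0,T)$ for $v\in L^{\infty}(0,T;V_{0})\cap L^{2}(0,T;V_{l})$ exactly when $s_{1}+s_{3}+1\leq2l$, i.e.\ $l\geq\frac{d+2}{4}$. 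The same count shows that $B(u_{\varepsilon},u_{\varepsilon})$, hence $\frac{du_{\varepsilon}}{dt}$, lies in a dual space in which the energy equality and the passage to the limit in the Galerkin approximation are justified. With this inserted your proposal is complete; without it, the stated range of $l$ is not established.
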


Notice that the conventional Navier-Stokes system can be written in the
evolution form%

\begin{align}
\dfrac{du}{dt}+\nu Au+B\left(  u,u\right)   &  =f\left(  x\right)  \text{, in
}\Omega\times\left(  0,\infty\right) \label{11}\\
u\left(  0\right)   &  =u_{0}\text{ \ ,\ in }\Omega\text{.} \label{12}%
\end{align}

\begin{theorem}
For $d\leq4$, for $f\in L^{2}(0;T;V_{0})$ and $u_{0}\in V_{0}$ be given. There
exists a weak solution of $($\ref{11}$)$-$($\ref{12}$)$ which satisfies $u\in
L^{\infty}(0;T;V_{0})\cap L^{2}(0;T;V_{1})$, for $T>0$. For $d=2$, $u$ is
unique (J. Lions \cite{25}).
\end{theorem}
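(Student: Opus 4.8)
The plan is to construct the solution by the Faedo--Galerkin method and then pass to the limit by a compactness argument; this is the classical Leray--Hopf construction, adapted to the solenoidal periodic spaces $V_s$ introduced above. Let $\{w_j\}_{j\geq1}$ be the orthonormal basis of $V_0$ formed by the eigenfunctions of the Stokes operator $A$, and let $P_m$ be the orthogonal projection of $V_0$ onto $\mathrm{span}\{w_1,\dots,w_m\}$. For each $m$ I would seek an approximate solution $u_m(t)=\sum_{j=1}^m g_{jm}(t)w_j$ of the finite-dimensional system
\begin{equation*}
\frac{du_m}{dt}+\nu Au_m+P_mB(u_m,u_m)=P_mf,\qquad u_m(0)=P_mu_0 .
\end{equation*}
Since the nonlinearity is quadratic, hence locally Lipschitz in the coefficients, the Cauchy--Lipschitz theorem yields a unique maximal solution on an interval $[0,T_m)$.

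Next I would derive the \emph{a priori} estimates. Taking the $V_0$ inner product of the equation with $u_m$ and using $b(u_m,u_m,u_m)=0$ from \eqref{3} gives
\begin{equation*}
\frac12\frac{d}{dt}\|u_m\|^2+\nu\|A^{1/2}u_m\|^2=(f,u_m)\leq\frac{\nu}{2}\|A^{1/2}u_m\|^2+\frac{1}{2\nu\lambda_1}\|f\|^2 ,
\end{equation*}
after Cauchy--Schwarz, the Poincar\'e inequality \eqref{8} and Young's inequality \eqref{7}. Integrating in $t$ bounds $u_m$ in $L^\infty(0,T;V_0)\cap L^2(0,T;V_1)$ uniformly in $m$ (so in particular $T_m=+\infty$). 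For the time derivative, the Ladyzhenskaya/Gagliardo--Nirenberg inequality $\|v\|_{L^4}\leq c\|v\|^{1-d/4}\|v\|_1^{d/4}$, valid for $d\leq4$, gives $\|B(u_m,u_m)\|_{V_1'}\leq c\|u_m\|_{L^4}^2\leq c\|u_m\|^{2-d/2}\|u_m\|_1^{d/2}$, so from the equation $du_m/dt$ is bounded in $L^{4/d}(0,T;V_1')$ uniformly in $m$.

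With these bounds I would pass to the limit. Since $V_1\hookrightarrow V_0$ compactly and $V_0\hookrightarrow V_1'$ continuously, the Aubin--Lions--Simon lemma shows $\{u_m\}$ is relatively compact in $L^2(0,T;V_0)$. Extracting a subsequence (not relabelled), $u_m\rightharpoonup u$ weakly-$*$ in $L^\infty(0,T;V_0)$, weakly in $L^2(0,T;V_1)$, and strongly in $L^2(0,T;V_0)$; the strong convergence is what lets one pass to the limit in the quadratic term, estimated against a fixed test function via \eqref{5}, while the linear terms pass by weak convergence. Testing against a scalar function vanishing at $t=T$ and integrating by parts in time then shows $u$ solves \eqref{11}--\eqref{12} in $V_1'$ with $u(0)=u_0$, and the pressure is recovered a posteriori by de Rham's theorem; this yields a weak solution of the stated regularity. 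For $d=2$, if $u,v$ are two such solutions and $w=u-v$, subtracting the equations and using $B(u,u)-B(v,v)=B(w,u)+B(v,w)$ together with $b(v,w,w)=0$ gives $\tfrac12\tfrac{d}{dt}\|w\|^2+\nu\|A^{1/2}w\|^2=-b(w,u,w)$; by \eqref{4} (or directly $\|w\|_{L^4}^2\le c\|w\|\,\|w\|_1$ in 2D) one gets $|b(w,u,w)|\leq\tfrac{\nu}{2}\|A^{1/2}w\|^2+\tfrac{c}{\nu}\|u\|_1^2\|w\|^2$, hence $\tfrac{d}{dt}\|w\|^2\leq\tfrac{c}{\nu}\|u\|_1^2\|w\|^2$, and since $\|u\|_1^2\in L^1(0,T)$ and $w(0)=0$, Gronwall's inequality forces $w\equiv0$.

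The step I expect to be the main obstacle is the compactness at the endpoint $d=4$: there $4/d=1$, so the time derivative is controlled only in $L^1(0,T;V_1')$, which is the borderline case for the Aubin--Lions--Simon lemma and must be handled with its sharper version (or, alternatively, by a direct Friedrichs-type argument exploiting the uniform $L^2(0,T;V_1)$ bound). Once the correct Gagliardo--Nirenberg exponents for $d\leq4$ are in place, the remaining estimates are routine.
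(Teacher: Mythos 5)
Your proposal is correct and is precisely the classical Faedo--Galerkin/Leray--Hopf argument; the paper itself offers no proof of this theorem, simply citing it as a known result of J. Lions, and your construction (energy estimate, $L^{4/d}(0,T;V_1')$ bound on the time derivative via Gagliardo--Nirenberg for $d\leq4$, Aubin--Lions--Simon compactness, and the Lions--Prodi uniqueness argument in $d=2$) is exactly the standard route behind that citation. Your flag about the borderline $d=4$ case is apt, and is correctly resolved by Simon's $L^1$-in-time version of the compactness lemma combined with the uniform $L^\infty(0,T;V_0)$ bound to upgrade to strong convergence in $L^2(0,T;V_0)$.
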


We will establish various estimates uniform in $\varepsilon$ for the solutions
of the modified Navier Stokes. These bounds will be used to establish the
limit of these solutions to the conventional Navier Stokes equations.

\begin{proposition}
\label{Proposition1}For $d\leq4$ and for $\varepsilon>0$ fixed, $f\in
L^{2}\left(  0,T;V_{0}\right)  $ and $u_{\varepsilon0}\in V_{0}$. The weak
solution $u_{\varepsilon}\left(  t\right)  $ of the modified Navier-Stokes
equations satisfy\newline$i)$ $u_{\varepsilon}$ \textit{is uniformly bounded
in} $L^{\infty}\left(  0,T;V_{0}\right)  $,\newline$ii)$ $u_{\varepsilon}$
\textit{is uniformly bounded in} $L^{2}\left(  0,T;V_{1}\right)  $.
\end{proposition}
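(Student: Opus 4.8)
The plan is to derive the standard energy estimates for the regularized system, being careful to track that all constants are independent of $\varepsilon$. First I would take the inner product of the evolution equation $(\ref{9})$ with $u_{\varepsilon}$ in $V_{0}$. Using the antisymmetry relation $(\ref{3})$, the nonlinear term drops out: $\left\langle B(u_{\varepsilon},u_{\varepsilon}),u_{\varepsilon}\right\rangle = b(u_{\varepsilon},u_{\varepsilon},u_{\varepsilon}) = 0$. The hyperviscous term gives $\varepsilon\left(A^{l}u_{\varepsilon},u_{\varepsilon}\right) = \varepsilon\Vert A^{l/2}u_{\varepsilon}\Vert^{2}\geq 0$, which is a favorable (dissipative) contribution, so we may simply discard it for the lower bound or keep it on the left-hand side; either way it does not spoil $\varepsilon$-uniformity since its sign is right. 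The viscous term yields $\nu\Vert A^{1/2}u_{\varepsilon}\Vert^{2}$. Thus we obtain
\begin{equation}
\frac{1}{2}\frac{d}{dt}\Vert u_{\varepsilon}\Vert^{2}+\varepsilon\Vert A^{l/2}u_{\varepsilon}\Vert^{2}+\nu\Vert A^{1/2}u_{\varepsilon}\Vert^{2}=\left\langle f,u_{\varepsilon}\right\rangle . \label{13}
\end{equation}

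Next I would bound the right-hand side. Since $f\in L^{2}(0,T;V_{0})$, we can write $\left\langle f,u_{\varepsilon}\right\rangle\leq\Vert f\Vert\,\Vert u_{\varepsilon}\Vert$, and then apply Young's inequality $(\ref{7})$ in the form $\Vert f\Vert\,\Vert u_{\varepsilon}\Vert\leq\frac{1}{2}\Vert f\Vert^{2}+\frac{1}{2}\Vert u_{\varepsilon}\Vert^{2}$ (alternatively, using Poincar\'e $(\ref{8})$, absorb a piece into the viscous term). Integrating $(\ref{13})$ over $(0,t)$ and dropping the nonnegative hyperviscous term on the left, we get
\begin{equation}
\Vert u_{\varepsilon}(t)\Vert^{2}+2\nu\int_{0}^{t}\Vert A^{1/2}u_{\varepsilon}(s)\Vert^{2}\,ds\leq\Vert u_{\varepsilon0}\Vert^{2}+\int_{0}^{T}\Vert f(s)\Vert^{2}\,ds+\int_{0}^{t}\Vert u_{\varepsilon}(s)\Vert^{2}\,ds. \label{14}
\end{equation}
An application of the (classical, differential) Gronwall lemma to $(\ref{14})$ then yields $\Vert u_{\varepsilon}(t)\Vert^{2}\leq\bigl(\Vert u_{\varepsilon0}\Vert^{2}+\Vert f\Vert_{L^{2}(0,T;V_{0})}^{2}\bigr)e^{T}$ for all $t\in[0,T]$, which is exactly the uniform bound $(i)$ in $L^{\infty}(0,T;V_{0})$ since the right-hand side does not involve $\varepsilon$. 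Feeding this bound back into $(\ref{14})$ gives $2\nu\int_{0}^{T}\Vert A^{1/2}u_{\varepsilon}(s)\Vert^{2}\,ds\leq C(T,\nu,f,u_{\varepsilon0})$, and by the equivalence of $\Vert A^{1/2}u\Vert$ with the $\dot H^{1}$-norm (equivalently the $V_{1}$-norm) stated in Section 2, this is the uniform bound $(ii)$ in $L^{2}(0,T;V_{1})$.

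The only subtlety — and the step I would be most careful with — is the rigorous justification of the energy identity $(\ref{13})$ itself: for a merely weak solution one is not entitled to pair the equation with $u_{\varepsilon}$ directly, since $u_{\varepsilon}$ need not be an admissible test function. The standard remedy is to run the argument on the Galerkin approximations $u_{\varepsilon}^{(m)}=\sum_{j=1}^{m}(u_{\varepsilon},w_{j})w_{j}$, where the pairing with $u_{\varepsilon}^{(m)}$ is legitimate, obtain the estimates $(i)$–$(ii)$ with $\varepsilon$-independent constants at the Galerkin level exactly as above, and then pass to the limit $m\to\infty$ using weak/weak-$*$ lower semicontinuity of the norms; the constants survive the limit unchanged. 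For $d\leq 4$ the $V_{l}$-regularity with $l\geq\frac{d+2}{4}$ (Theorem \ref{Theorem1}) is what guarantees the nonlinear term is well-defined and the Galerkin scheme converges, so no further hypotheses are needed. This completes the proof.
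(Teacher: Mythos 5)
Your proposal is correct and follows essentially the same route as the paper: the paper states the proposition without an explicit proof in Section~3, but carries out exactly this energy estimate in Section~4 (equations (\ref{14})--(\ref{20})) — pairing with $u_{\varepsilon}$, killing the nonlinearity via $b(u_\varepsilon,u_\varepsilon,u_\varepsilon)=0$, discarding the nonnegative hyperviscous term, and controlling the forcing, the only cosmetic difference being that the paper absorbs $(f,u_\varepsilon)$ into the viscous term via Poincar\'e (yielding a time-uniform constant) where you use additive Young plus Gronwall (yielding an $e^{T}$ factor), an alternative you yourself note. Your remark on justifying the energy identity at the Galerkin level is a careful addition the paper leaves implicit.
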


We need the following Lemma proved in R. Temam \cite[Lemma 4.1.ChIII,Sec4]{33}.

\begin{lemma}
\label{Lemma1}The form $b$ is trilinear continuous on $V\times V\times V_{s}$
if $s\geq\frac{d}{2}$ and
\end{lemma}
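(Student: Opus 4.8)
The plan is to bound $b(u,v,w)$ directly from its integral definition by H\"older's inequality in the three factors $u$, $\nabla v$ and $w$, and then to absorb the $L^{p}$-norms that appear into $\|u\|_{1}$, $\|v\|_{1}$ and $\|w\|_{V_{s}}$ by the Sobolev embedding theorem. The delicate point --- and, I expect, the only real obstacle --- is the borderline case $s=\frac{d}{2}$, where $V_{s}$ just fails to embed into $L^{\infty}(\Omega)$.

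First I would fix H\"older exponents $p_{1},p_{3}\in[2,\infty]$ with $\frac{1}{p_{1}}+\frac{1}{2}+\frac{1}{p_{3}}=1$ and write, using the equivalence of norms in $\mathbb{R}^{d}$ to collapse the finite sum,
\[
\left\vert b(u,v,w)\right\vert =\left\vert \sum_{i,j=1}^{d}\int_{\Omega}u_{i}\,\frac{\partial v_{j}}{\partial x_{i}}\,w_{j}\,dx\right\vert \leq c\,\|u\|_{L^{p_{1}}}\,\|v\|_{1}\,\|w\|_{L^{p_{3}}}.
\]
In the subcritical range $s>\frac{d}{2}$ I would take $p_{1}=2$ and $p_{3}=\infty$ and use the embedding $V_{s}\hookrightarrow L^{\infty}(\Omega)$ together with the Poincar\'{e} inequality \eqref{8}, which gives the (even sharper) bound $\left\vert b(u,v,w)\right\vert \leq c\,\|u\|\,\|v\|_{1}\,\|w\|_{V_{s}}\leq c\,\|u\|_{1}\,\|v\|_{1}\,\|w\|_{V_{s}}$.

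For $s=\frac{d}{2}$ one cannot afford $L^{\infty}$, so the idea is to trade it for a large finite exponent: $V_{d/2}\hookrightarrow L^{q}(\Omega)$ for every $q<\infty$. Choosing $p_{3}=q$ forces $\frac{1}{p_{1}}=\frac{1}{2}-\frac{1}{q}$; taking $q=d$ (when $d\geq3$) yields $p_{1}=\frac{2d}{d-2}$, which is exactly the Sobolev exponent of $V_{1}=\dot{H}^{1}(\Omega)$, so $\|u\|_{L^{p_{1}}}\leq c\|u\|_{1}$, while for $d=1,2$ any finite $p_{1}$ is admissible with the same conclusion. Hence $\left\vert b(u,v,w)\right\vert \leq c\,\|u\|_{1}\,\|v\|_{1}\,\|w\|_{V_{d/2}}$, and since $\Omega$ is bounded the continuous inclusion $V_{s}\hookrightarrow V_{d/2}$ for $s\geq\frac{d}{2}$ upgrades this to $\left\vert b(u,v,w)\right\vert \leq c\,\|u\|_{1}\,\|v\|_{1}\,\|w\|_{V_{s}}$, which is the asserted trilinear continuity on $V\times V\times V_{s}$. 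All computations are first carried out for smooth, divergence-free, periodic, zero-average fields and then extended by density.

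Beyond the endpoint bookkeeping just described, nothing else should cause trouble: trilinearity is immediate from the definition, the alternative route through the identity $b(u,v,w)=-b(u,w,v)$ for $\operatorname{div}u=0$ (which moves the derivative onto $w$) leads to the same estimate, and the density argument is standard.
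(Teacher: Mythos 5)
Two preliminary remarks. First, the paper offers no proof of this lemma at all — it is simply quoted from Temam \cite{33} — so your attempt is being measured against the standard argument rather than anything in the text. Second, your subcritical case $s>\frac{d}{2}$ is correct and delivers exactly the displayed inequality. The genuine gap is at the endpoint $s=\frac{d}{2}$, which is precisely the case the paper needs: for $d=3,4$ one has $\sup(\frac{d}{2},\frac{d+2}{4})=\frac{d}{2}$, so $l=\frac{d}{2}$ is admitted. At that endpoint you prove only
\[
|b(u,v,w)|\le c\,\|u\|_{1}\,\|v\|_{1}\,\|w\|_{s},
\]
with the $H^{1}$ norm of $u$ in the first slot, whereas the lemma asserts $c\,\|u\|\,\|v\|_{1}\,\|w\|_{s}$ with the $L^{2}$ norm. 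This is not cosmetic: the sole purpose of the lemma here is Lemma \ref{Lemma2}, where $\|B(u(t))\|_{V_{l}'}\le c\,\|u(t)\|\,\|u(t)\|_{1}$ together with $u\in L^{\infty}(0,T;V_{0})\cap L^{2}(0,T;V_{1})$ yields $B(u)\in L^{2}(0,T;V_{l}')$. Your endpoint bound produces $\|u\|_{1}^{2}$, hence only $B(u)\in L^{1}(0,T;V_{l}')$, and Lemma \ref{Lemma3} and the compactness argument behind Theorem \ref{Theorem4} no longer go through.

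The missing idea is exactly the route you mention at the end and dismiss as ``leading to the same estimate'' — it does not; it leads to a strictly better one at the endpoint. Antisymmetrize first: $b(u,v,w)=-b(u,w,v)$ moves the derivative onto the $V_{s}$ entry, and for $s=\frac{d}{2}$, $d\ge3$, H\"older with exponents $2$, $d$, $\frac{2d}{d-2}$ gives
\[
|b(u,w,v)|\le \|u\|_{L^{2}}\,\|\nabla w\|_{L^{d}}\,\|v\|_{L^{2d/(d-2)}}\le c\,\|u\|\,\|w\|_{d/2}\,\|v\|_{1},
\]
using the embeddings $H^{d/2-1}\hookrightarrow L^{d}$ and $H^{1}\hookrightarrow L^{2d/(d-2)}$, both of which hold, whereas your split needs the false embedding $H^{d/2}\hookrightarrow L^{\infty}$ (and no finite-exponent substitute can recover the $L^{2}$ norm of $u$, since by duality in $u$ the claimed bound amounts to the product estimate $\|\nabla v\cdot w\|_{L^{2}}\le c\|v\|_{1}\|w\|_{d/2}$, which forces $w$ essentially into $L^{\infty}$). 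For $d=2$, $s=1$ even the antisymmetrized route breaks down and one must fall back on the Ladyzhenskaya interpolation inequality; the displayed estimate should really be read with $d\ge3$ in mind, which is the case used in the sequel. With the antisymmetrization inserted, your proof closes for $d=3,4$.
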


\[
\Vert b(u,v,w)\Vert\leq c_{4}\Vert u\Vert\Vert v\Vert_{1}\Vert w\Vert_{s}.
\]

Applying\ Lemma
\ref{Lemma1}
we obtain

\begin{lemma}
\label{Lemma2}Let $u_{\varepsilon}\left(  t\right)  $ be a weak solution of
the modified Navier-Stokes system. Then $B\left(  u_{\varepsilon}\right)  $
belongs to $L^{2}\left(  0,T;V_{l}^{\prime}\right)  $ for $l\geq\frac{d}{2}$.
\end{lemma}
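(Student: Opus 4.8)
The plan is to bound the $V_l'$-norm of $B(u_{\varepsilon},u_{\varepsilon})$ pointwise in $t$ by the product $\Vert u_{\varepsilon}\Vert\,\Vert u_{\varepsilon}\Vert_{1}$, and then integrate in time using the a priori bounds of Proposition \ref{Proposition1}. Throughout I write $B(u_{\varepsilon})$ as shorthand for $B(u_{\varepsilon},u_{\varepsilon})$.

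First I would fix $s=\tfrac{d}{2}$ and observe that, since $l\geq\tfrac{d}{2}$, the space $V_{l}$ embeds continuously into $V_{s}$, so that $\Vert w\Vert_{s}\leq c_{5}\Vert w\Vert_{l}$ for every $w\in V_{l}$ (this is just the comparison of Sobolev norms on zero-average, divergence-free fields, via Poincar\'{e}). For a.e.\ $t\in(0,T)$ the weak solution satisfies $u_{\varepsilon}(t)\in V_{1}\subset V$, so Lemma \ref{Lemma1} applies with $u=v=u_{\varepsilon}(t)$ and any test field $w\in V_{l}$, giving
\[
\left\vert b(u_{\varepsilon}(t),u_{\varepsilon}(t),w)\right\vert \leq c_{4}\Vert u_{\varepsilon}(t)\Vert\,\Vert u_{\varepsilon}(t)\Vert_{1}\,\Vert w\Vert_{s}\leq c_{4}c_{5}\Vert u_{\varepsilon}(t)\Vert\,\Vert u_{\varepsilon}(t)\Vert_{1}\,\Vert w\Vert_{l}.
\]
By the definition \eqref{2} of $B$ and the definition of the dual norm, taking the supremum over $w\in V_{l}$ with $\Vert w\Vert_{l}\leq1$ yields
\[
\Vert B(u_{\varepsilon}(t))\Vert_{V_{l}^{\prime}}\leq c_{6}\,\Vert u_{\varepsilon}(t)\Vert\,\Vert u_{\varepsilon}(t)\Vert_{1}\quad\text{for a.e. }t\in(0,T).
\]

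Next I would square this inequality and integrate over $(0,T)$, pulling the $V_{0}$-norm out in $L^{\infty}$:
\[
\int_{0}^{T}\Vert B(u_{\varepsilon})\Vert_{V_{l}^{\prime}}^{2}\,dt\leq c_{6}^{2}\,\Vert u_{\varepsilon}\Vert_{L^{\infty}(0,T;V_{0})}^{2}\int_{0}^{T}\Vert u_{\varepsilon}(t)\Vert_{1}^{2}\,dt.
\]
Both factors on the right-hand side are finite: by Proposition \ref{Proposition1}$(i)$, $u_{\varepsilon}$ is bounded in $L^{\infty}(0,T;V_{0})$, and by Proposition \ref{Proposition1}$(ii)$, $u_{\varepsilon}\in L^{2}(0,T;V_{1})$. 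Hence $B(u_{\varepsilon})\in L^{2}(0,T;V_{l}^{\prime})$, which is the claim.

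The only point that needs a little care — and which I expect to be the main (minor) obstacle — is verifying that the operator $B$, which a priori is only defined as a map $V_{2}\times V_{2}\rightarrow V_{2}^{\prime}$ in \eqref{2}, actually extends continuously to $V_{1}\times V_{1}$ with values in $V_{l}^{\prime}$ for the relevant range of $l$; but this is exactly what the pointwise estimate above delivers, the admissibility of $V_{l}$ in the third slot of Lemma \ref{Lemma1} being guaranteed by $l\geq\tfrac{d}{2}$. Everything else is a direct application of H\"{o}lder's inequality in time together with the uniform energy estimates already established in Proposition \ref{Proposition1}.
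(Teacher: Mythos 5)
Your proposal is correct and follows essentially the same route as the paper: apply Lemma \ref{Lemma1} (with the third slot in $V_{s}$, $s\geq d/2$, hence in $V_{l}$) to get the pointwise dual-norm bound $\Vert B(u_{\varepsilon}(t))\Vert_{V_{l}^{\prime}}\leq c\,\Vert u_{\varepsilon}(t)\Vert\,\Vert u_{\varepsilon}(t)\Vert_{1}$, then integrate using the $L^{\infty}(0,T;V_{0})\cap L^{2}(0,T;V_{1})$ bounds of Proposition \ref{Proposition1}. In fact you spell out the final integration step and the embedding $V_{l}\subset V_{d/2}$ more explicitly than the paper does.
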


\bigskip

\begin{proof}
By the definition of the operator $B$ and the above Lemma, we get%
\begin{align*}
\left\vert \left\langle B(u\left(  t\right)  ,v)\right\rangle \right\vert  &
=\left\vert b(u\left(  t\right)  ,u\left(  t\right)  ,v)\right\vert \\
&  \leq c_{4}\Vert u\left(  t\right)  \Vert\Vert u\left(  t\right)  \Vert
_{1}\Vert v\Vert_{V_{l}^{\prime}},\text{ }\forall v\in V_{l}.
\end{align*}
We set
\[
B(u\left(  t\right)  )=B(u\left(  t\right)  ,u\left(  t\right)  ),
\]
thus%
\[
\Vert B(u\left(  t\right)  )\Vert_{V_{l}^{\prime}}\leq c_{4}\Vert u\left(
t\right)  \Vert\Vert u\left(  t\right)  \Vert_{1}\text{ for }0\leq t\leq T.
\]

\end{proof}

\begin{lemma}
\label{Lemma3}If $f\in L^{2}\left(  0,T;V_{1}^{\prime}\right)  $, then, for
any solution $u_{\varepsilon}\left(  t\right)  $ of problem $($\ref{1}$)$ the
time derivative $\dfrac{du_{\varepsilon}}{dt}$ \textit{is uniformly bounded
in} $L^{2}\left(  0,T;V_{l}^{\prime}\right)  $.
\end{lemma}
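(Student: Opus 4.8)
The plan is to read $\dfrac{du_{\varepsilon}}{dt}$ off of the evolution equation $($\ref{9}$)$ and bound each of the three right-hand terms $-\varepsilon A^{l}u_{\varepsilon}$, $-\nu Au_{\varepsilon}$ and $-B(u_{\varepsilon},u_{\varepsilon})$ in $L^{2}(0,T;V_{l}^{\prime})$, using the uniform-in-$\varepsilon$ bounds from Proposition~\ref{Proposition1} together with $f\in L^{2}(0,T;V_{1}^{\prime})\subset L^{2}(0,T;V_{l}^{\prime})$ (since $l\geq 1$ forces $V_{l}\hookrightarrow V_{1}$, hence $V_{1}^{\prime}\hookrightarrow V_{l}^{\prime}$). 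Thus I would write $\dfrac{du_{\varepsilon}}{dt}=f-\varepsilon A^{l}u_{\varepsilon}-\nu Au_{\varepsilon}-B(u_{\varepsilon},u_{\varepsilon})$ in $V_{l}^{\prime}$ and estimate the $L^{2}(0,T;V_{l}^{\prime})$ norm of each summand separately.

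For the hyperviscous term, for $v\in V_{l}$ one has $\langle A^{l}u_{\varepsilon},v\rangle=(A^{l/2}u_{\varepsilon},A^{l/2}v)\leq \|u_{\varepsilon}\|_{V_{l}}\|v\|_{V_{l}}$, so $\|\varepsilon A^{l}u_{\varepsilon}\|_{V_{l}^{\prime}}\leq \varepsilon\|u_{\varepsilon}\|_{V_{l}}$; since $u_{\varepsilon}\in L^{2}(0,T;V_{l})$ with norm bounded uniformly in $\varepsilon$ (this is the content of Theorem~\ref{Theorem1} and the energy estimate underlying it, where the $\varepsilon$ appears as a coefficient and the bound on $\varepsilon\|u_{\varepsilon}\|_{L^{2}(0,T;V_{l})}^{2}$ is in fact uniform), the contribution of $\varepsilon A^{l}u_{\varepsilon}$ in $L^{2}(0,T;V_{l}^{\prime})$ is uniformly bounded. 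For the Stokes term, $\|Au_{\varepsilon}\|_{V_{l}^{\prime}}\leq\|u_{\varepsilon}\|_{2-l}$ roughly, but more simply since $l\geq\frac{d}{2}\geq 1$ we have $\|Au_{\varepsilon}\|_{V_{l}^{\prime}}\leq c\|u_{\varepsilon}\|_{1}$ (pairing $Au_{\varepsilon}$ against $v\in V_{l}\subset V_{1}$ and using $|\langle Au_{\varepsilon},v\rangle|=|(A^{1/2}u_{\varepsilon},A^{1/2}v)|\leq\|u_{\varepsilon}\|_{1}\|v\|_{1}\leq c\|u_{\varepsilon}\|_{1}\|v\|_{V_{l}}$), which is in $L^{2}(0,T)$ by part $ii)$ of Proposition~\ref{Proposition1}, again uniformly in $\varepsilon$. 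For the nonlinear term, Lemma~\ref{Lemma2} gives $\|B(u_{\varepsilon}(t))\|_{V_{l}^{\prime}}\leq c_{4}\|u_{\varepsilon}(t)\|\,\|u_{\varepsilon}(t)\|_{1}$ for $l\geq\frac{d}{2}$, and then $\|u_{\varepsilon}\|_{L^{\infty}(0,T;V_{0})}\,\|u_{\varepsilon}\|_{L^{2}(0,T;V_{1})}$ is finite and uniform in $\varepsilon$ by Proposition~\ref{Proposition1}. Combining the three estimates and the bound on $f$ yields $\dfrac{du_{\varepsilon}}{dt}\in L^{2}(0,T;V_{l}^{\prime})$ with a bound independent of $\varepsilon$.

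The one point requiring a little care — and the main obstacle — is the hyperviscous term: a naive bound $\varepsilon\|u_{\varepsilon}\|_{L^{2}(0,T;V_{l})}$ is only useful if we know $\|u_{\varepsilon}\|_{L^{2}(0,T;V_{l})}$ stays bounded (or grows slower than $\varepsilon^{-1}$) as $\varepsilon\to 0$. From the standard energy identity $\tfrac12\tfrac{d}{dt}\|u_{\varepsilon}\|^{2}+\varepsilon\|u_{\varepsilon}\|_{V_{l}}^{2}+\nu\|u_{\varepsilon}\|_{1}^{2}=\langle f,u_{\varepsilon}\rangle$ one gets $\varepsilon\int_{0}^{T}\|u_{\varepsilon}\|_{V_{l}}^{2}\,dt\leq C(f,u_{0},\nu,T)$ uniformly, so $\varepsilon\|u_{\varepsilon}\|_{V_{l}}=\sqrt{\varepsilon}\cdot\sqrt{\varepsilon}\|u_{\varepsilon}\|_{V_{l}}$ and $\|\varepsilon A^{l}u_{\varepsilon}\|_{L^{2}(0,T;V_{l}^{\prime})}^{2}\leq\varepsilon\cdot\varepsilon\int_{0}^{T}\|u_{\varepsilon}\|_{V_{l}}^{2}\leq\varepsilon C\to 0$; in particular it is bounded. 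So in fact the hyperviscous contribution not only stays bounded but vanishes in the limit, which is even better than needed. Assembling all four pieces with the triangle inequality in $L^{2}(0,T;V_{l}^{\prime})$ completes the proof of Lemma~\ref{Lemma3}.
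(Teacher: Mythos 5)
Your proposal is correct and follows essentially the same route as the paper: read $\dfrac{du_{\varepsilon}}{dt}=f-\varepsilon A^{l}u_{\varepsilon}-\nu Au_{\varepsilon}-B(u_{\varepsilon},u_{\varepsilon})$ off equation $($\ref{9}$)$ and bound each term in $L^{2}(0,T;V_{l}^{\prime})$, using Lemma~\ref{Lemma2} for the nonlinear term. The only difference is that you supply the justification for the uniform bound on the hyperviscous term via the energy identity $\varepsilon\int_{0}^{T}\Vert u_{\varepsilon}\Vert_{V_{l}}^{2}\,dt\leq C$, a point the paper simply asserts; your version is the more complete one.
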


\begin{proof}
Due to Lemma \ref{Lemma2} $B\left(  u_{\varepsilon}\right)  $ belongs to
$L^{2}\left(  0,T;V_{l}^{\prime}\right)  $, since $f-\varepsilon
A^{l}u_{\varepsilon}-\nu Au_{\varepsilon}\ $belongs to $L^{2}\left(
0,T;V_{l}^{\prime}\right)  $, this implies that $\dfrac{du_{\varepsilon}}%
{dt}\ $belongs to $L^{2}\left(  0,T;V_{l}^{\prime}\right)  $.
\end{proof}

\begin{lemma}
$u_{\varepsilon}$ is almost everywhere equal to a continuous function from
$[0,T]$ to the space\ $V_{0}$.
\end{lemma}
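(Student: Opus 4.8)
\emph{Plan of proof.} The statement is the standard continuity-in-the-pivot-space conclusion for weak solutions, and the plan is to reduce it to the classical Lions--Magenes lemma (see R.~Temam \cite[Ch.~III, Lemma 1.2]{33}): if $V\hookrightarrow H\hookrightarrow V^{\prime}$ is an evolution (Gelfand) triple with continuous dense embeddings, and if $u\in L^{2}\left(  0,T;V\right)  $ with $\frac{du}{dt}\in L^{2}\left(  0,T;V^{\prime}\right)  $, then $u$ is a.e. equal to a function in $C\left(  [0,T];H\right)  $ (moreover $t\mapsto\left\Vert u(t)\right\Vert _{H}^{2}$ is absolutely continuous and $\frac{d}{dt}\left\Vert u\right\Vert _{H}^{2}=2\left\langle \frac{du}{dt},u\right\rangle$). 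We apply this with $V=V_{l}$, $H=V_{0}$, $V^{\prime}=V_{l}^{\prime}$.

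First I would collect the two regularity inputs that are already established. By Theorem \ref{Theorem1}, the weak solution satisfies $u_{\varepsilon}\in L^{2}\left(  0,T;V_{l}\right)  $. By Lemma \ref{Lemma3}, $\frac{du_{\varepsilon}}{dt}\in L^{2}\left(  0,T;V_{l}^{\prime}\right)  $; the hypothesis $f\in L^{2}\left(  0,T;V_{1}^{\prime}\right)  $ required there holds here because, identifying $V_{0}$ with its dual via the $L^{2}$ inner product, one has $V_{0}=V_{0}^{\prime}\hookrightarrow V_{1}^{\prime}$, so the assumption $f\in L^{2}\left(  0,T;V_{0}\right)  $ of Proposition \ref{Proposition1} gives $f\in L^{2}\left(  0,T;V_{1}^{\prime}\right)  $.

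Next I would verify that $\left(  V_{l},V_{0},V_{l}^{\prime}\right)  $ is indeed an evolution triple for $l\geq1$: the injection $V_{l}\hookrightarrow V_{0}$ is continuous, and it is dense since the smooth divergence-free periodic zero-average fields used to define the $V_{s}$ are contained in $V_{l}$ and are dense in $V_{0}$; identifying $V_{0}\cong V_{0}^{\prime}$ then yields the continuous dense injection $V_{0}\hookrightarrow V_{l}^{\prime}$. With these two facts in hand, the cited lemma applies verbatim and produces a representative of $u_{\varepsilon}$ lying in $C\left(  [0,T];V_{0}\right)  $, which is the assertion (and, in passing, also gives the energy identity $\frac{d}{dt}\left\Vert u_{\varepsilon}\right\Vert ^{2}=2\left\langle \frac{du_{\varepsilon}}{dt},u_{\varepsilon}\right\rangle$ that is used later).

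There is no genuine obstacle in this step: it is a direct invocation of a textbook result, and the only care needed is the (routine) checking of its hypotheses, namely the density of $V_{l}$ in $V_{0}$ and the $L^{2}\left(  0,T;V_{l}^{\prime}\right)  $ bound on the time derivative, both of which are immediate from the material already in hand. If one preferred a self-contained argument, the same conclusion follows by the usual mollification-in-time scheme: regularize $u_{\varepsilon}$ in $t$, use $\frac{d}{dt}\left\Vert u_{\varepsilon}^{\eta}-u_{\varepsilon}^{\mu}\right\Vert ^{2}=2\left\langle \frac{d}{dt}\left(  u_{\varepsilon}^{\eta}-u_{\varepsilon}^{\mu}\right)  ,u_{\varepsilon}^{\eta}-u_{\varepsilon}^{\mu}\right\rangle$ together with the $L^{2}\left(  0,T;V_{l}\right)  $ and $L^{2}\left(  0,T;V_{l}^{\prime}\right)  $ bounds to show $\left(  u_{\varepsilon}^{\eta}\right)  $ is Cauchy in $C\left(  [0,T];V_{0}\right)  $, and pass to the limit.
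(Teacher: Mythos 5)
Your argument is correct, but it follows a genuinely different route from the paper's. The paper deduces the lemma from Temam \cite[Ch.~III, Lemma 1.4]{33}: from $u_{\varepsilon}\in L^{\infty}(0,T;V_{0})\cap L^{2}(0,T;V_{1})$ and $\frac{du_{\varepsilon}}{dt}\in L^{2}(0,T;V_{l}^{\prime})$ one gets continuity of $u_{\varepsilon}$ into the large space $V_{l}^{\prime}$, and then the boundedness in $V_{0}$ upgrades this to \emph{weak} continuity into $V_{0}$. You instead invoke the Lions--Magenes lemma for the Gelfand triple $V_{l}\hookrightarrow V_{0}\hookrightarrow V_{l}^{\prime}$, pairing $u_{\varepsilon}\in L^{2}(0,T;V_{l})$ from Theorem \ref{Theorem1} with $\frac{du_{\varepsilon}}{dt}\in L^{2}(0,T;V_{l}^{\prime})$ from Lemma \ref{Lemma3}; your verification of the hypotheses (density of $V_{l}$ in $V_{0}$, and $f\in L^{2}(0,T;V_{1}^{\prime})$ via $V_{0}\cong V_{0}^{\prime}\hookrightarrow V_{1}^{\prime}$) is routine but correctly carried out, and it is important that the derivative estimate is taken against the dual of the \emph{same} space in which $u_{\varepsilon}$ is square-integrable, which is exactly what Theorem \ref{Theorem1} and Lemma \ref{Lemma3} provide. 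What your route buys is strictly more: it yields genuine norm continuity into $V_{0}$ together with the energy identity $\frac{d}{dt}\Vert u_{\varepsilon}\Vert^{2}=2\langle\frac{du_{\varepsilon}}{dt},u_{\varepsilon}\rangle$, which is what the lemma as stated actually asserts, whereas the paper's cited lemma, read strictly, only delivers weak continuity in $V_{0}$. The only item in the paper's proof that you do not address is the identification $u(0)=u_{0}$ in the limit $\varepsilon\to 0$, but that is not part of the statement being proved here.
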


\begin{proof}
Since $u_{\varepsilon}\in L^{2}\left(  0,T;V_{1}\right)  \cap L^{\infty
}\left(  0,T;V_{0}\right)  $ and $\dfrac{du_{\varepsilon}}{dt}\in L^{2}\left(
0,T;V_{l}^{\prime}\right)  $, the weak continuity in $V_{0}$ is a direct
consequence of \cite[Lemma 1.4.ChIII,Sec1]{33}.

Similarly, it follows that $u_{\varepsilon}\left(  0\right)  $ converges to
$u\left(  0\right)  $ in $V_{0}$, and since $u_{\varepsilon0}$ converges to
$u_{0}$ in $V_{l}^{\prime}$, we conclude that $u(0)=u_{0}.$
\end{proof}

Now we prove the strong convergence. It follows from $ii)$ of Proposition
\ref{Proposition1} and from Lemma \ref{Lemma3}, that easily \newline%
$u_{\varepsilon_{n}}\in\mathcal{X=}\{u_{\varepsilon_{n}}\in L^{2}(0,T;V_{1}),$
$\dfrac{du_{\varepsilon_{n}}}{dt}\in L^{2}\left(  0,T;V_{l}^{\prime}\right)
\}$ with bounds independent of $\varepsilon_{n}$. Hence\newline$\left(
i\right)  $\ $\ u_{\varepsilon_{n}}\rightarrow u$ in $L^{2}(0,T;V_{l})$
\ weakly; and $\left(  ii\right)  $ $\dfrac{du_{\varepsilon_{n}}}%
{dt}\rightarrow\dfrac{du}{dt}$ in $L^{2}(0,T;V_{l}^{\prime})$
\ weakly;\newline These two properties allow us to establish the strong
convergence result.

The proof of the following theorem can be found in R. Temam \cite[Theorem 2.1,
Chapter III, Sec\ 2]{33}.

\begin{theorem}
\label{Theorem3}The injection of $\mathcal{X}=\{u\in L^{2}(0,T;V_{1}),$
$\dfrac{du_{\varepsilon}}{dt}\in L^{2}\left(  0,T;V_{l}^{\prime}\right)  \}$
into $\mathcal{Y}=\left\{  u\in L^{2}(0,T;V_{0})\right\}  $ is compact.
\end{theorem}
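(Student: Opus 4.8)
The plan is to recognize this as the Aubin--Lions--Simon compactness lemma for the Gelfand triple $V_1\hookrightarrow V_0\hookrightarrow V_l'$ and to carry out Simon's argument in our concrete spaces. First I would record the functional setting. Since $l>1$, Poincar\'e's inequality gives $V_l\hookrightarrow V_1\hookrightarrow V_0$ continuously; identifying $V_0$ with its dual as pivot space yields the chain $V_l\subset V_0\cong V_0'\subset V_l'$ with continuous, dense inclusions, and in particular $V_0\hookrightarrow V_l'$ continuously. Moreover $V_1\hookrightarrow V_0$ is compact by the Rellich--Kondrachov theorem on the bounded cube $\Omega$, hence $V_1\hookrightarrow V_l'$ is compact as well. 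The workhorse is Ehrling's lemma: for every $\eta>0$ there is $C_\eta>0$ with $\|v\|_{V_0}\le\eta\|v\|_{V_1}+C_\eta\|v\|_{V_l'}$ for all $v\in V_1$, which follows from the compactness of $V_1\hookrightarrow V_0$ by the standard contradiction argument.

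Now let $(u_n)$ be a bounded sequence in $\mathcal{X}$, say $\|u_n\|_{L^2(0,T;V_1)}\le M$ and $\|du_n/dt\|_{L^2(0,T;V_l')}\le M'$. I would verify the three hypotheses of the vector-valued Riesz--Fr\'echet--Kolmogorov criterion for relative compactness in $L^2(0,T;V_0)$. (i) Boundedness in $L^2(0,T;V_0)$ is immediate from $V_1\hookrightarrow V_0$. (ii) For every $0\le t_1<t_2\le T$ the set $\{\int_{t_1}^{t_2}u_n(t)\,dt\}$ is relatively compact in $V_0$, because these integrals are bounded in $V_1$ (by $\sqrt{t_2-t_1}\,M$) and $V_1\hookrightarrow V_0$ is compact. (iii) Time translations are uniformly small: using Ehrling,
\[
\|u_n(\cdot+h)-u_n\|_{L^2(0,T-h;V_0)}\le\eta\,\|u_n(\cdot+h)-u_n\|_{L^2(0,T-h;V_1)}+C_\eta\|u_n(\cdot+h)-u_n\|_{L^2(0,T-h;V_l')}\le 2\eta M+C_\eta h M',
\]
where the last $V_l'$-estimate comes from $u_n(t+h)-u_n(t)=\int_t^{t+h}(du_n/ds)\,ds$ and the Cauchy--Schwarz inequality; choosing first $\eta$ and then $h$ small makes this arbitrarily small, uniformly in $n$.

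With (i)--(iii) established, the Riesz--Fr\'echet--Kolmogorov criterion gives that $(u_n)$ is relatively compact in $L^2(0,T;V_0)$; since the inclusion $\mathcal{X}\hookrightarrow L^2(0,T;V_0)$ is continuous and sends this bounded set into a relatively compact one, it is compact. The main obstacle is not any single estimate but the correct handling of the infinite-dimensionality of $V_0$: one cannot reduce to the scalar Fr\'echet--Kolmogorov theorem, and the compact embedding $V_1\hookrightarrow V_0$ must be used twice---once for Ehrling's inequality, which controls the ``spatial'' oscillations, and once for hypothesis (ii), which controls the finite-dimensional behaviour of the time-averages---while the bound on $du_n/dt$ in $V_l'$ supplies only time-equicontinuity. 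An equivalent route, closer to Temam's presentation, is to extract a subsequence with $u_n\rightharpoonup u$ in $L^2(0,T;V_1)$ and $du_n/dt\rightharpoonup du/dt$ in $L^2(0,T;V_l')$, apply Ehrling to $u_n-u$ to reduce the claim to strong convergence in $L^2(0,T;V_l')$, and then exploit the compact embedding $V_1\hookrightarrow V_l'$ on the time-integrated differences; either way one obtains Temam, Chapter III, Theorem 2.1.
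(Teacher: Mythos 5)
Your proposal is mathematically sound, but note that the paper does not actually prove this statement at all: it is quoted verbatim from R.~Temam, \emph{Navier--Stokes Equations}, Chapter~III, Theorem~2.1 (the Aubin--Lions compactness theorem for the triple $V_{1}\hookrightarrow V_{0}\hookrightarrow V_{l}^{\prime}$), so any complete argument you give is by construction ``a different route'' from the paper's. Your route via the vector-valued Riesz--Fr\'echet--Kolmogorov criterion is essentially Simon's proof of the Aubin--Lions--Simon lemma, and all three hypotheses are verified correctly: the time-translation estimate $\|u_{n}(\cdot+h)-u_{n}\|_{L^{2}(0,T-h;V_{l}^{\prime})}\leq hM^{\prime}$ does follow from writing the increment as $\int_{t}^{t+h}(du_{n}/ds)\,ds$ and applying Cauchy--Schwarz twice, and Ehrling's lemma (which needs $V_{1}\hookrightarrow V_{0}$ compact and $V_{0}\hookrightarrow V_{l}^{\prime}$ continuous and injective, both available here since $l>1$ and $\Omega$ is a bounded cube) converts this into smallness in $V_{0}$. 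The alternative you sketch in your last sentence --- extract a weakly convergent subsequence, reduce via Ehrling to strong convergence in $L^{2}(0,T;V_{l}^{\prime})$, and handle that by pointwise convergence plus dominated convergence --- is in fact the proof Temam gives in the cited reference, so you have correctly identified both the standard argument and the one the paper implicitly relies on. What your Kolmogorov-type argument buys is that it avoids passing to subsequences and generalizes immediately to $L^{p}$ in time with only an $L^{1}$ bound on the derivative; what Temam's route buys is brevity, since the weak limit is already in hand from the surrounding a priori estimates. The only cosmetic caveat is that the set $\mathcal{X}$ as written in the statement conflates the generic element $u$ with the specific solution $u_{\varepsilon}$; your reading of it as the standard space $\{u\in L^{2}(0,T;V_{1}):du/dt\in L^{2}(0,T;V_{l}^{\prime})\}$ with its graph norm is the intended one.
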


By virtue of the above estimates and the compactness Theorem \ref{Theorem3}.
We can now state our first result.

\begin{theorem}
\label{Theorem4} For $l\geq\sup(\frac{d}{2},\frac{d+2}{4})$ and for $d\leq4$,
the weak solution $u_{\varepsilon}$ of the modified Navier-Stokes equations
$($\ref{1}$)$ given by Theorem \ref{Theorem1} converges strongly in
$L^{2}(0,T;V_{0})$ as $\varepsilon\rightarrow0$ to $u$ a weak solution of the
system $($\ref{11}$)$-$($\ref{12}$)$.
\end{theorem}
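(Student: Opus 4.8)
The plan is to pass to the limit in the weak formulation of the regularized system \eqref{9} using the uniform bounds already assembled, and then separately establish strong convergence in $L^{2}(0,T;V_{0})$ via the Aubin--Lions compactness of Theorem \ref{Theorem3}. First I would fix a sequence $\varepsilon_{n}\to 0$ and invoke Proposition \ref{Proposition1} to extract a subsequence (not relabeled) with $u_{\varepsilon_{n}}\rightharpoonup u$ weak-$*$ in $L^{\infty}(0,T;V_{0})$ and weakly in $L^{2}(0,T;V_{1})$. Because $l\ge\frac d2$, Lemma \ref{Lemma3} gives $\frac{du_{\varepsilon_{n}}}{dt}$ bounded in $L^{2}(0,T;V_{l}')$, so the pair $(u_{\varepsilon_{n}})$ lies in a bounded subset of $\mathcal{X}=\{v\in L^{2}(0,T;V_{1}):v_{t}\in L^{2}(0,T;V_{l}')\}$; Theorem \ref{Theorem3} then yields, along a further subsequence, $u_{\varepsilon_{n}}\to u$ strongly in $L^{2}(0,T;V_{0})=\mathcal{Y}$, which is exactly the asserted convergence. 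In particular $u_{\varepsilon_{n}}\to u$ in $L^{2}(\Omega\times(0,T))$, and after one more subsequence extraction, a.e. on $\Omega\times(0,T)$.

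Next I would identify $u$ as a weak solution of \eqref{11}--\eqref{12}. The delicate point is the nonlinear term: for a fixed test function $w$ which is smooth, divergence-free, $L$-periodic, zero-average and compactly supported in time, I would write $\int_{0}^{T}b(u_{\varepsilon_{n}},u_{\varepsilon_{n}},w)\,dt = -\int_{0}^{T}b(u_{\varepsilon_{n}},w,u_{\varepsilon_{n}})\,dt$ using the antisymmetry \eqref{3}, and pass to the limit using the strong $L^{2}(0,T;V_{0})$ convergence of $u_{\varepsilon_{n}}$ together with $w\in V_{2}$: since $b(\cdot,w,\cdot)$ is continuous on $V_{0}\times V_{2}\times V_{0}$ (this follows from \eqref{5} with $m_{1}=m_{3}=0$, $m_{2}+1=2$, whose sum exceeds $\tfrac32$), the product of one weakly and one strongly convergent factor passes to the limit. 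The linear Stokes term passes by weak convergence in $L^{2}(0,T;V_{1})$, the time-derivative term by the weak convergence $\frac{du_{\varepsilon_{n}}}{dt}\rightharpoonup\frac{du}{dt}$ in $L^{2}(0,T;V_{l}')$, and the forcing term is fixed. The new term $\varepsilon_{n}\int_{0}^{T}(A^{l}u_{\varepsilon_{n}},w)\,dt = \varepsilon_{n}\int_{0}^{T}(A^{l/2}u_{\varepsilon_{n}},A^{l/2}w)\,dt$ is controlled by $\varepsilon_{n}\|u_{\varepsilon_{n}}\|_{L^{2}(0,T;V_{l})}\|w\|_{L^{2}(0,T;V_{l})}$; since $\|u_{\varepsilon_{n}}\|_{L^{2}(0,T;V_{l})}$ may itself blow up, I would instead obtain from the basic energy estimate $\varepsilon_{n}\|u_{\varepsilon_{n}}\|_{L^{2}(0,T;V_{l})}^{2}\le C$ uniformly, so the term is $O(\varepsilon_{n}^{1/2})\to 0$. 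Collecting the limits shows $u$ satisfies \eqref{11} in the sense of distributions with values in $V_{1}'$, and the initial condition $u(0)=u_{0}$ is recovered from the continuity-in-time statement already proved (the last Lemma before Theorem \ref{Theorem3}) together with $u_{\varepsilon_{n}0}\to u_{0}$.

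The main obstacle, as usual for Navier--Stokes-type limits, is the nonlinear term, and specifically having enough compactness to pass to the limit in $b(u_{\varepsilon_{n}},u_{\varepsilon_{n}},w)$ when $d=3,4$; this is precisely why the hypothesis $l\ge\sup(\tfrac d2,\tfrac{d+2}{4})$ is imposed — the bound $l\ge\tfrac d2$ is what makes $B(u_{\varepsilon_{n}})$ lie in $L^{2}(0,T;V_{l}')$ (Lemma \ref{Lemma2}) and hence gives the uniform $v_{t}$ bound feeding Theorem \ref{Theorem3}, while $l\ge\tfrac{d+2}{4}$ is Lions' condition guaranteeing the regularized solutions exist in the first place (Theorem \ref{Theorem1}). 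I would handle the $d=4$ case with the same argument, noting that $V_{1}\hookrightarrow V_{0}$ compactly on the bounded domain $\Omega$ is what underlies Theorem \ref{Theorem3}, so no separate treatment is needed. Finally I would remark that uniqueness of the limit is not claimed for $d\ge 3$ (consistent with the preceding theorem), so the conclusion is only that \emph{some} subsequence converges to \emph{a} weak solution; if one wanted convergence of the full family one would need uniqueness of the limiting weak solution, which holds only for $d=2$.
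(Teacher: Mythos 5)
Your argument is essentially the paper's own: the uniform bounds from Proposition \ref{Proposition1} and Lemma \ref{Lemma3} place $u_{\varepsilon_{n}}$ in a bounded set of $\mathcal{X}$, the compactness Theorem \ref{Theorem3} yields the strong convergence in $L^{2}(0,T;V_{0})$, and the limit is identified by passing to the limit in the weak formulation with the hyperviscous term vanishing (the paper moves $A^{l}$ entirely onto the test function to get $O(\varepsilon_{n})$ from the uniform $L^{2}(0,T;V_{0})$ bound, while your energy-estimate bound $\varepsilon_{n}\Vert u_{\varepsilon_{n}}\Vert_{L^{2}(0,T;V_{l})}^{2}\leq C$ gives $O(\varepsilon_{n}^{1/2})$ --- both suffice). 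The only slip is your appeal to $(\ref{5})$ with $m_{1}=m_{3}=0$, $m_{2}=1$, which violates the stated requirement $m_{1}+m_{2}+m_{3}\geq\tfrac{3}{2}$; since your test function is smooth this is harmless (take $m_{2}$ larger), and your closing caveat that for $d\geq3$ one only gets subsequential convergence to \emph{some} weak solution is a point the paper leaves implicit.
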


\begin{proof}
Theorem \ref{Theorem1} and Lemma \ref{Lemma1} are satisfied for $l\geq
\sup(\frac{d}{2},\frac{d+2}{4})$. We use part $ii)$ of Proposition
\ref{Proposition1} and Lemma \ref{Lemma3} we can deduce that the weak
solutions\newline$u_{\varepsilon_{n}}\in\mathcal{X=}\{u_{\varepsilon_{n}}\in
L^{2}(0,T;V_{1}),$ $\dfrac{du_{\varepsilon_{n}}}{dt}\in L^{2}\left(
0,T;V_{l}^{\prime}\right)  \}$. Hence, the compactness Theorem \ref{Theorem3}
implies the strong convergence in $L^{2}(0,T;V_{0})$.
\end{proof}

The following proposition is a consequence of Proposition \ref{Proposition1}.

\begin{proposition}
\label{Proposition2} $\forall w\in L^{2}\left(  0,T;V_{1}\right)  $,
$\forall\dfrac{dw}{dt}\in L^{2}\left(  0,T;V_{1}^{\prime}\right)  $%
\newline$a)\ \lim\limits_{n\rightarrow\infty}\int_{0}^{T}(\dfrac
{du_{\varepsilon_{n}}\left(  t\right)  }{dt},w)dt=\int_{0}^{T}(\dfrac
{du\left(  t\right)  }{dt},w\left(  t\right)  )dt,$\newline$b)\ \lim
\limits_{n\rightarrow\infty}\int_{0}^{T}\left(  \nabla u_{\varepsilon_{n}%
}\left(  t\right)  ,\nabla w\left(  t\right)  \right)  dt=\int_{0}^{T}\left(
\nabla u\left(  t\right)  ,\nabla w\left(  t\right)  \right)  dt,$%
\newline$c)\ \lim\limits_{n\rightarrow\infty}\int_{0}^{T}b\left(
u_{\varepsilon_{n}}\left(  t\right)  ,u_{\varepsilon_{n}}\left(  t\right)
,w\left(  t\right)  \right)  dt=\int_{0}^{T}b\left(  u\left(  t\right)
,u\left(  t\right)  ,w\left(  t\right)  \right)  dt$.
\end{proposition}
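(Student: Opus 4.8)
The plan is to prove the three limits as straightforward consequences of the weak convergences already established, namely $u_{\varepsilon_n}\rightharpoonup u$ in $L^2(0,T;V_l)$ and $\tfrac{du_{\varepsilon_n}}{dt}\rightharpoonup\tfrac{du}{dt}$ in $L^2(0,T;V_l')$, together with the strong convergence $u_{\varepsilon_n}\to u$ in $L^2(0,T;V_0)$ from Theorem \ref{Theorem4}. For part $a)$, since $w\in L^2(0,T;V_1)\subset L^2(0,T;V_l)$ (because $l\geq d/2\geq 1$ for $d\geq 2$, so $V_1\hookrightarrow V_l$ is false — actually $V_l\hookrightarrow V_1$; one must instead view $w$ as an element testing against $\tfrac{du_{\varepsilon_n}}{dt}\in L^2(0,T;V_l')$, which requires $w\in L^2(0,T;V_l)$). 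The cleanest route: interpret the pairing $(\tfrac{du_{\varepsilon_n}}{dt},w)$ as $\langle\tfrac{du_{\varepsilon_n}}{dt},w\rangle$ with $w\in L^2(0,T;V_1)\cap\{dw/dt\in L^2(0,T;V_1')\}$, integrate by parts in $t$ to move the derivative onto $w$, use $u_{\varepsilon_n}\to u$ strongly in $L^2(0,T;V_0)$ against $dw/dt\in L^2(0,T;V_1')\subset L^2(0,T;V_0')$, and handle the boundary terms via the continuity $u_{\varepsilon_n}\in C([0,T];V_0)$ and the convergence $u_{\varepsilon_n}(0)\to u_0$, $u_{\varepsilon_n}(T)\to u(T)$ in $V_0$ (which follows from the uniform bounds and a subsequence argument). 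Alternatively, and more directly, part $a)$ is immediate from $(ii)$ above: $\tfrac{du_{\varepsilon_n}}{dt}\rightharpoonup\tfrac{du}{dt}$ weakly in $L^2(0,T;V_l')$ and $w\in L^2(0,T;V_l)$ is a fixed test function, so the duality pairing converges.

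For part $b)$, I would note that $\nabla:V_1\to (L^2(\Omega))^{d\times d}$ is a bounded linear operator and that $u_{\varepsilon_n}\rightharpoonup u$ weakly in $L^2(0,T;V_1)$ (this follows from part $ii)$ of Proposition \ref{Proposition1}: the sequence is bounded in $L^2(0,T;V_1)$, hence has a weakly convergent subsequence, and the limit must be $u$ by uniqueness of the $L^2(0,T;V_0)$-limit). Since $w\mapsto\int_0^T(\nabla v(t),\nabla w(t))\,dt$ is a bounded linear functional on $L^2(0,T;V_1)$ for fixed $w\in L^2(0,T;V_1)$, weak convergence in $L^2(0,T;V_1)$ gives the result directly.

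For part $c)$, the nonlinear term, I would write the difference
\[
b(u_{\varepsilon_n},u_{\varepsilon_n},w)-b(u,u,w)=b(u_{\varepsilon_n}-u,u_{\varepsilon_n},w)+b(u,u_{\varepsilon_n}-u,w),
\]
and estimate each piece after integrating over $(0,T)$. Using the antisymmetry $(\ref{3})$ I would rewrite the second term as $-b(u,w,u_{\varepsilon_n}-u)$. For the first term I would apply Lemma \ref{Lemma1} (valid since $l\geq d/2$, though here one only needs $w\in V_s$ with $s\geq d/2$, and $w\in V_1$; if $d\geq 3$ this is the delicate point — see below) to bound $|b(u_{\varepsilon_n}-u,u_{\varepsilon_n},w)|\leq c_4\|u_{\varepsilon_n}-u\|\,\|u_{\varepsilon_n}\|_1\,\|w\|_s$ and then Cauchy–Schwarz in $t$, using that $\|u_{\varepsilon_n}-u\|_{L^2(0,T;V_0)}\to 0$ and $\|u_{\varepsilon_n}\|_{L^2(0,T;V_1)}$ is uniformly bounded; one also needs $w\in L^\infty(0,T;V_s)$ or $L^2(0,T;V_s)$ with an interpolation, which holds since $w\in L^2(0,T;V_1)$ with $dw/dt\in L^2(0,T;V_1')$ gives $w\in C([0,T];V_0)$, and a further interpolation with the $V_1$ bound. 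For the rewritten second term $b(u,w,u_{\varepsilon_n}-u)$ I would use a symmetric estimate, $|b(u,w,u_{\varepsilon_n}-u)|\leq c\|u\|_1\|w\|_{s+1}\|u_{\varepsilon_n}-u\|$ via $(\ref{5})$, again closing with the strong $L^2(0,T;V_0)$ convergence and the uniform $L^2(0,T;V_1)$ bound on $u$.

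The main obstacle is the regularity of the test function $w$ in part $c)$ when $d=3$ or $d=4$: the estimates in Lemma \ref{Lemma1} and $(\ref{5})$ require $w$ (or $\nabla w$) to lie in $V_s$ with $s\geq d/2$, i.e. $s\geq 3/2$ or $s\geq 2$, whereas the proposition only assumes $w\in L^2(0,T;V_1)$. If the statement is to hold as written for all $d\leq 4$, one must exploit that the difference $u_{\varepsilon_n}-u$ converges strongly in $L^2(0,T;V_0)$ and is bounded in $L^2(0,T;V_1)$, so by interpolation it converges strongly in $L^2(0,T;V_s)$ for every $s<1$; then the product estimate $|b(a,b,w)|\leq c\|a\|_{s_1}\|b\|_{s_2+1}\|w\|_{s_3}$ from $(\ref{5})$ can be balanced with $s_3=1$ provided $s_1+s_2\geq 1/2$, which is achievable by putting the convergent factor in $\|a\|_{s_1}$ with $s_1$ slightly below $1$ and the bounded factor in $\|b\|_{s_2+1}$ with $s_2$ near $0$. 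I would therefore organize the argument around interpolation inequalities rather than a direct application of Lemma \ref{Lemma1}, reserving the latter for the two-dimensional case where $V_1\hookrightarrow V_{d/2}$ already. The remaining steps ($a$ and $b$) are routine functional-analytic consequences of weak convergence and present no difficulty.
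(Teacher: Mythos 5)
The paper offers no proof of this proposition at all: it simply asserts that it ``is a consequence of Proposition \ref{Proposition1}'' and immediately afterwards only ever uses the limit passage against smooth test functions $\varphi\in\mathcal{D}(0,T;\mathcal{D}(A^{l/2}))$, for which all three limits are easy. So there is no paper argument to match; your proposal is the standard one, and part $b)$ is correct as you state it (boundedness in $L^{2}(0,T;V_{1})$ plus uniqueness of the limit gives weak convergence in $L^{2}(0,T;V_{1})$, against which the fixed functional $v\mapsto\int_0^T(\nabla v,\nabla w)\,dt$ converges). For part $a)$ you correctly spot the duality mismatch ($V_{1}\not\hookrightarrow V_{l}$ for $l>1$, so $w$ cannot directly test $du_{\varepsilon_n}/dt\in L^{2}(0,T;V_{l}')$), and integration by parts in $t$ is the right repair; but two of your supporting claims are wrong. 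The inclusion $L^{2}(0,T;V_{1}')\subset L^{2}(0,T;V_{0}')$ is backwards ($V_{1}\hookrightarrow V_{0}$ gives $V_{0}'\hookrightarrow V_{1}'$); the correct closing step is to pair the fixed $dw/dt\in L^{2}(0,T;V_{1}')$ against the \emph{weak} convergence $u_{\varepsilon_n}\rightharpoonup u$ in $L^{2}(0,T;V_{1})$, not against strong $V_{0}$ convergence. And your ``alternative, more direct'' route for $a)$ reinstates exactly the false inclusion $w\in L^{2}(0,T;V_{l})$ that you had just refuted, so it should be deleted.

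The genuine gap is in part $c)$. You rightly observe that for $d=3,4$ the trilinear estimates of Lemma \ref{Lemma1} and $(\ref{5})$ demand more regularity of $w$ than $V_{1}$, but your interpolation repair does not close: after bounding $|b(u_{\varepsilon_n}-u,u_{\varepsilon_n},w)|\le c\,\|u_{\varepsilon_n}-u\|_{s_1}\|u_{\varepsilon_n}\|_{1}\|w\|_{1}$ you must integrate in time a product of a factor converging in (at best) $L^{4}(0,T)$ with two factors that are only in $L^{2}(0,T)$, and $\tfrac14+\tfrac12+\tfrac12>1$, so H\"older does not apply. Indeed, for a $3$D weak solution $u\in L^{\infty}(0,T;V_{0})\cap L^{2}(0,T;V_{1})$ and a test function merely in $L^{2}(0,T;V_{1})$, even the absolute integrability of $t\mapsto b(u,u,w)$ over $(0,T)$ is not guaranteed (the natural bound $\|u\|^{1/2}\|u\|_{1}^{3/2}\|w\|_{1}$ is $L^{4/3}\cdot L^{2}$ in time, which again fails H\"older), so the proposition as stated is doubtful for $d\ge 3$ unless $w$ is assumed more regular in time or space (e.g.\ $w\in L^{4}(0,T;V_{1})$, or $w$ smooth as in the paper's actual application). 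The honest conclusion is that $c)$ should either be restated for smoother $w$ --- which is all the paper needs --- or proved with a genuinely different balancing of exponents; your sketch as written does not establish it.
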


Let us now establish the limit of the equations $($\ref{9}$)$ as
$\varepsilon_{n}\rightarrow0$. Taking the inner product of $($\ref{9}$)$ with
a test function $\varphi\in\mathcal{D(}0,T;\mathcal{D}(A^{\frac{l}{2}}%
))$\thinspace\ then integrate by parts and using the convergence Proposition
\ref{Proposition2} we can pass to the limit as $\varepsilon_{n}\rightarrow0,$
we get $-\int_{0}^{T}\left(  u,\varphi^{\prime}\right)  dt+\nu\int_{0}%
^{T}\left(  \nabla u,\nabla\varphi\right)  dt+\int_{0}^{T}b\left(
u,u,\varphi\right)  dt=\int_{0}^{T}\left\langle f,\varphi\right\rangle dt.$

Here the term $\varepsilon_{n}\int_{0}^{T}(A^{\frac{l}{2}}u_{\varepsilon_{n}%
}\left(  t\right)  ,A^{\frac{l}{2}}\varphi\left(  t\right)  )dt$ goes to $0$
as $\varepsilon_{n}\rightarrow0$.

Since the weak solution $u_{\varepsilon_{n}}\in L^{2}\left(  0,T;V_{1}\right)
$ with bound uniform in $\varepsilon_{n}$ and we get $\varepsilon_{n}\int
_{0}^{T}\left\vert (A^{\frac{l}{2}}u_{\varepsilon_{n}},A^{\frac{l}{2}}%
\varphi)\right\vert dt\leq\varepsilon_{n}\int_{0}^{T}\left\vert \left(
u_{\varepsilon_{n}},A^{l}\varphi\right)  \right\vert dt\leq c\varepsilon_{n}.$

Since $u\in L^{2}\left(  0,T;V_{1}\right)  \cap L^{\infty}\left(
0,T;V_{0}\right)  $, we can conclude that $u$ is indeed a weak solution for
the conventional Navier-Stokes equations.

\section{The hyperviscous Navier-Stokes system and attractors}

Now, we consider modifications of the 3D Navier-Stokes system by adding a
fourth order artificial viscosity term (Laplacian square) depending on a small
parameter $\varepsilon$ to the conventional system.%

\begin{equation}%
\begin{array}
[c]{c}%
\dfrac{du_{\varepsilon}}{dt}+\varepsilon A^{2}u_{\varepsilon}+\nu
Au_{\varepsilon}+B\left(  u_{\varepsilon},u_{\varepsilon}\right)  =f\left(
x\right)  ,\text{ in }\Omega\times\left(  0,\infty\right) \\
\text{div }u_{\varepsilon}=0,\text{ in }\Omega\times\left(  0,\infty\right)
\text{, }u_{\varepsilon}\left(  x,0\right)  =u_{\varepsilon0}\left(  x\right)
,\text{ in }\Omega\text{,}\\
p(x+Le_{i},t)=p(x,t),\text{ }u(x+Le_{i},t)=u(x,t)\text{\ \ }i=1,2,3.\ t\in
\left(  0,\infty\right)
\end{array}
\label{13}%
\end{equation}
where $\Omega=\left(  0,L\right)  ^{3}$. In this section we will show
the\ existence of the compact global attractor $\mathfrak{A}_{\varepsilon}$
associated with the semigroup $S_{\varepsilon}\left(  t\right)  $ generated by
the problem $($\ref{13}$)$. (For the theory of global attractors see \cite{2},
\cite{8}, \cite{14}, \cite{18}, \cite{27}, \cite{30}, \cite{32}.).

For $\varepsilon=0$ weak solutions of problem are known to exist by a basic
result by J. Leray from 1934 \cite{24}, only the uniqueness of weak solutions
remains as an open problem. Then the known theory of global attractors of
infinite dimensional dynamical systems is not applicable to the 3D
Navier--Stokes system.

The theory of trajectory attractors for evolution partial differential
equations was developed in \cite{30}, which the uniqueness theorem of
solutions of the corresponding initial-value problem is not proved yet, e.g.
for the 3D Navier--Stokes system (see, for instance,\cite{14,30}). Such
trajectory attractor is a classical global attractor but in the space of weak solutions.

The problem of upper semicontinuity of global attractors for the 2D with
periodic boundary conditions was discussed by Yuh-Roung Ou and S. S.
Sritharan\ in \cite{28}. For related results which use the theory has been
introduced by Foias, Sell, and Temam in \cite{12,32} to show that the system
$($\ref{1}$)$ possesses an inertial manifold (see \cite{1,29,32}).

The existence and uniqueness results for initial value problem $($\ref{13}$)$
are consequence of Theorem \ref{Theorem4} for $l=2$ and $d=3$.

\begin{theorem}
\label{Theorem5}Let $\Omega\subset%
\mathbb{R}
^{3}$, and let $f\in L^{2}\left(  0,T;V_{2}^{\prime}\right)  $ and
$u_{\varepsilon0}\in V_{0}$ be given. Then there exists a unique weak solution
of $($\ref{13}$)$\ which satisfies\newline$u_{\varepsilon}\in C\left(  \left[
0,T\right]  ;V_{0}\right)  \cap L^{2}\left(  0,T;V_{2}\right)  ,\forall T>0$.
Then as $\varepsilon\rightarrow0$, the solution $u_{\varepsilon}$ converges to
a weak solution of the Navier-Stokes equations.
\end{theorem}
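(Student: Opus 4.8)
The plan is to derive, for $l=2$ and $d=3$, the existence and uniqueness of a weak solution to $(\ref{13})$ by the standard Faedo--Galerkin procedure, and then to obtain the convergence $u_\varepsilon \to u$ as a direct specialization of Theorem \ref{Theorem4}. First I would set up the Galerkin approximation $u_{\varepsilon}^{m} = \sum_{j=1}^{m} g_j^m(t) w_j$ in the span of the first $m$ eigenfunctions of $A$, projecting $(\ref{13})$ onto this span to get a system of ODEs with locally Lipschitz right-hand side, hence a local solution. The \emph{a priori} estimates are obtained by testing against $u_\varepsilon^m$: using $b(u_\varepsilon^m,u_\varepsilon^m,u_\varepsilon^m)=0$ from $(\ref{3})$, one gets
\[
\frac{1}{2}\frac{d}{dt}\|u_\varepsilon^m\|^2 + \varepsilon\|A u_\varepsilon^m\|^2 + \nu\|A^{1/2}u_\varepsilon^m\|^2 = \langle f, u_\varepsilon^m\rangle,
\]
and after using Poincar\'e $(\ref{8})$ and Young $(\ref{7})$ on the right side one obtains bounds for $u_\varepsilon^m$ uniform in $m$ (but depending on $\varepsilon$) in $L^\infty(0,T;V_0)\cap L^2(0,T;V_2)$. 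These are exactly the bounds of Proposition \ref{Proposition1} (with $V_1$ there replaced by the stronger $V_2$ since $l=2$), together with Lemma \ref{Lemma2} and Lemma \ref{Lemma3} which give $B(u_\varepsilon^m)\in L^2(0,T;V_2')$ and hence $du_\varepsilon^m/dt$ bounded in $L^2(0,T;V_2')$ since $l=2\geq d/2=3/2$.

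Next I would pass to the limit $m\to\infty$ along a subsequence: weak-$*$ convergence in $L^\infty(0,T;V_0)$, weak convergence in $L^2(0,T;V_2)$, and the Aubin--Lions compactness (Theorem \ref{Theorem3}, or its $V_2\hookrightarrow V_0 \hookrightarrow V_2'$ version) give strong convergence in $L^2(0,T;V_0)$, which suffices to pass to the limit in the nonlinear term $B$ exactly as in Proposition \ref{Proposition2}(c). The linear terms $\varepsilon A^2 u_\varepsilon^m$ and $\nu A u_\varepsilon^m$ converge weakly in $L^2(0,T;V_2')$ against test functions in $\mathcal{D}(A)$, and one recovers $(\ref{13})$ in the weak sense; the continuity $u_\varepsilon \in C([0,T];V_0)$ follows from $u_\varepsilon\in L^2(0,T;V_2)$, $du_\varepsilon/dt \in L^2(0,T;V_2')$ via the Lions--Magenes lemma (\cite[Lemma 1.4.ChIII,Sec1]{33}), and the initial condition is attained. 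For uniqueness, let $w = u_\varepsilon - v_\varepsilon$ be the difference of two solutions with the same data; testing the difference equation with $w$ gives
\[
\frac{1}{2}\frac{d}{dt}\|w\|^2 + \varepsilon\|Aw\|^2 + \nu\|A^{1/2}w\|^2 = -b(w, u_\varepsilon, w),
\]
and here the key point is that the hyperviscous coercivity term $\varepsilon\|Aw\|^2$ controls the nonlinearity: by Lemma \ref{Lemma1} with $s=2\geq d/2$ (or by $(\ref{5})$ with suitable $m_i$) one bounds $|b(w,u_\varepsilon,w)| \leq c\,\|w\|\,\|u_\varepsilon\|_1\,\|w\|_{2} \leq \varepsilon\|Aw\|^2 + \tfrac{c}{\varepsilon}\|u_\varepsilon\|_1^2\|w\|^2$ by Young, so Gronwall with $\|u_\varepsilon\|_1^2\in L^1(0,T)$ forces $w\equiv 0$.

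Finally, the convergence statement as $\varepsilon\to 0$ is immediate: the hypotheses of Theorem \ref{Theorem4} are met with $l=2$, $d=3$ since $2 = l \geq \sup(d/2, (d+2)/4) = \sup(3/2, 5/4) = 3/2$, so that theorem gives strong convergence of $u_\varepsilon$ in $L^2(0,T;V_0)$ to a weak solution of $(\ref{11})$--$(\ref{12})$; I would simply cite it. The main obstacle is the uniqueness argument: unlike the 3D Navier--Stokes case where one lacks sufficient regularity, here one must verify carefully that the available regularity of a weak solution ($u_\varepsilon\in L^2(0,T;V_2)$, $du_\varepsilon/dt\in L^2(0,T;V_2')$) is enough to justify the energy equality for the difference $w$ and to estimate $b(w,u_\varepsilon,w)$ with a constant absorbed by $\varepsilon\|Aw\|^2$ — this is precisely where the fourth-order term is essential and where the estimate $(\ref{5})$ / Lemma \ref{Lemma1} must be applied with the correct Sobolev exponents in dimension three.
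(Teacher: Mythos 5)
Your proposal is correct and follows essentially the same route as the paper, which itself gives no independent proof of this theorem: it simply invokes the Lions existence--uniqueness theory (Theorem \ref{Theorem1}, established by exactly the Faedo--Galerkin scheme and hyperviscous energy/uniqueness estimate you describe, applicable since $l=2\geq\frac{d+2}{4}=\frac{5}{4}$), together with Theorem \ref{Theorem4} for the limit $\varepsilon\rightarrow0$ and the earlier lemma for the continuity $u_{\varepsilon}\in C\left(\left[0,T\right];V_{0}\right)$. The details you supply --- in particular absorbing $b(w,u_{\varepsilon},w)$ into $\varepsilon\left\Vert Aw\right\Vert^{2}$ via Lemma \ref{Lemma1} with $s=2$ --- are precisely what the paper delegates to the references.
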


Now, we show that the semigroup $S_{\varepsilon}\left(  t\right)  $ has an
absorbing ball in $V_{0}$ and an absorbing ball in $V_{1}$. Then we show that
$S_{\varepsilon}\left(  t\right)  $ admits a compact attractor in $V_{0}$ for
each $\varepsilon\geq0$.

We take the inner product of $($\ref{13}$)$ with $u_{\varepsilon}$, we obtain
the energy equality%
\[
\frac{d}{dt}\left\Vert u_{\varepsilon}\right\Vert ^{2}+2\varepsilon\left\Vert
Au_{\varepsilon}\right\Vert ^{2}+2\nu\left\Vert \nabla u_{\varepsilon
}\right\Vert ^{2}=2\left(  f,u_{\varepsilon}\right)  .
\]
Here we have used the fact that $b\left(  u_{\varepsilon},u_{\varepsilon
},u_{\varepsilon}\right)  =0$. By applying Young's inequality and the
Poincar\'{e} Lemma, we get%
\begin{equation}
\frac{d}{dt}\left\Vert u_{\varepsilon}\right\Vert ^{2}+2\varepsilon\left\Vert
Au_{\varepsilon}\right\Vert ^{2}+\nu\left\Vert \nabla u_{\varepsilon
}\right\Vert ^{2}\leq\frac{\left\Vert f\right\Vert ^{2}}{\nu\lambda_{1}},
\label{14}%
\end{equation}
we drop the term $2\varepsilon\left\Vert Au_{\varepsilon}\right\Vert ^{2}$, we
obtain%
\[
\frac{d}{dt}\left\Vert u_{\varepsilon}\right\Vert ^{2}+\nu\lambda
_{1}\left\Vert u_{\varepsilon}\right\Vert ^{2}\leq\frac{\left\Vert
f\right\Vert ^{2}}{\nu\lambda_{1}},
\]
by integrating the above inequality from $0$ to $t$,we get%
\begin{equation}
\left\Vert u_{\varepsilon}\left(  t\right)  \right\Vert ^{2}\leq\left\Vert
u_{\varepsilon0}\right\Vert ^{2}e^{-\nu\lambda_{1}t}+\rho_{0}^{2}\left(
1-e^{-\nu\lambda_{1}t}\right)  ,\text{ }t>0, \label{15}%
\end{equation}
where $\rho_{0}=\dfrac{1}{\nu\lambda_{1}}\left\Vert f\right\Vert $. Hence for
any ball $B_{R_{0}}=\left\{  u_{\varepsilon0}\in V_{0};\text{ }\left\Vert
u_{\varepsilon0}\right\Vert \leq R_{0}\right\}  $ there is a ball $B\left(
0,\delta_{0}\right)  $ in $V_{0}$ centered at origin with radius $\delta
_{0}>\rho_{0}$ $\left(  R_{0}>\delta_{0}\right)  $ such that
\begin{equation}
S_{\varepsilon}(t)B_{R_{0}}\subset B_{r_{0}}\text{ for }t\geq t_{0}\left(
B_{R_{0}}\right)  =\frac{1}{\nu\lambda_{1}}\log\frac{R_{0}^{2}-\rho_{0}^{2}%
}{\delta_{0}^{2}-\rho_{0}^{2}}. \label{16}%
\end{equation}
The ball $B_{\delta_{0}}$ is said to be absorbing and invariant under the
action of $S_{\varepsilon}(t)$.

Taking the limit in $($\ref{15}$)$ we get,%
\begin{equation}
\lim\sup_{t\rightarrow\infty}\left\Vert u_{\varepsilon}\left(  t\right)
\right\Vert \leq\rho_{0}\text{.} \label{17}%
\end{equation}
We integrate $($\ref{14}$)$ from $t$ to $t+r$, we obtain for $u_{\varepsilon
0}\in B_{R_{0}}$
\begin{equation}
\int_{t}^{t+r}\left\Vert u_{\varepsilon}\right\Vert _{1}^{2}ds\leq\frac{1}%
{\nu}(\frac{r\left\Vert f\right\Vert ^{2}}{\nu\lambda_{1}}+\left\Vert
u_{\varepsilon}\left(  t\right)  \right\Vert ^{2})\text{, }\forall r>0,\text{
}\forall t\geq t_{0}(B_{R_{0}}). \label{18}%
\end{equation}
With the use of $($\ref{17}$)$ we conclude that
\begin{equation}
\lim\sup_{t\rightarrow\infty}\int_{t}^{t+r}\left\Vert u_{\varepsilon
}\right\Vert _{1}^{2}ds\leq\frac{r}{\nu^{2}\lambda_{1}}\left\Vert f\right\Vert
^{2}+\frac{\left\Vert f\right\Vert ^{2}}{\nu^{3}\lambda_{1}^{2}}, \label{19}%
\end{equation}
from which we obtain
\begin{equation}
\lim\sup_{t\rightarrow\infty}\frac{1}{t}\int_{0}^{t}\left\Vert u_{\varepsilon
}\right\Vert _{1}^{2}ds\leq\frac{\left\Vert f\right\Vert ^{2}}{\nu^{2}%
\lambda_{1}}, \label{20}%
\end{equation}
this verifies that the left-hand side is finite.

To show that the semigroup $S_{\varepsilon}(t)$ has an absorbing set in
$V_{1}$, we consider the strong solutions and take the inner product of
$($\ref{13}$)$ with\ $Au_{\varepsilon}$, we obtain%
\begin{equation}
\frac{1}{2}\frac{d}{dt}\Vert A^{\frac{1}{2}}u_{\varepsilon}\Vert
^{2}+\varepsilon\Vert A^{\frac{3}{2}}u_{\varepsilon}\Vert^{2}+\nu\Vert
Au_{\varepsilon}\Vert^{2}=-b(u_{\varepsilon},u_{\varepsilon},Au_{\varepsilon
})+(f,Au_{\varepsilon}). \label{21}%
\end{equation}
By applying Young's inequality, we get%
\begin{align*}
(f,Au_{\varepsilon})  &  \leq\left\Vert f\right\Vert \left\Vert
Au_{\varepsilon}\right\Vert \\
&  \leq\frac{\nu}{4}\left\Vert Au_{\varepsilon}\right\Vert ^{2}+\frac{1}{\nu
}\left\Vert f\right\Vert ^{2}.
\end{align*}
By using the Agmon's inequality $($\ref{6}$)$ and Young's inequality we can
estimate the last term in the left-hand side of $($\ref{21}$)$ as follows%
\begin{align*}
\left\vert b(u_{\varepsilon},u_{\varepsilon},Au_{\varepsilon})\right\vert  &
\leq\left\Vert u_{\varepsilon}\right\Vert _{\infty}\left\Vert u_{\varepsilon
}\right\Vert _{1}\left\Vert Au_{\varepsilon}\right\Vert \\
&  \leq c_{4}\left\Vert u_{\varepsilon}\right\Vert _{1}^{\frac{3}{2}%
}\left\Vert Au_{\varepsilon}\right\Vert ^{\frac{3}{2}}\\
&  \leq\frac{\nu}{4}\left\Vert Au_{\varepsilon}\right\Vert ^{2}+c_{4}%
\left\Vert u_{\varepsilon}\right\Vert _{1}^{6}.
\end{align*}
Hence we obtain from $($\ref{21}$)$
\[
\frac{d}{dt}\left\Vert u_{\varepsilon}\right\Vert _{1}^{2}+2\varepsilon\Vert
A^{\frac{3}{2}}u_{\varepsilon}\Vert^{2}+\nu\left\Vert Au_{\varepsilon
}\right\Vert ^{2}\leq\frac{2}{\nu}\left\Vert f\right\Vert ^{2}+2c_{5}%
\left\Vert u_{\varepsilon}\right\Vert _{1}^{6}\text{.}%
\]
Dropping the positive terms associated with $\varepsilon$ we have
\begin{equation}
\frac{d}{dt}\left\Vert u_{\varepsilon}\right\Vert _{1}^{2}+\nu\left\Vert
A_{1}u_{\varepsilon}\right\Vert ^{2}\leq\frac{2\left\Vert f\right\Vert ^{2}%
}{\nu}+2c_{4}\left\Vert u_{\varepsilon}\right\Vert _{1}^{6} \label{22}%
\end{equation}
we apply the uniform Gronwall Lemma to $($\ref{22}$)$\ with%
\[
g=2c_{4}\left\Vert u_{\varepsilon}\right\Vert _{1}^{4},\text{ }h=\frac
{2\left\Vert f\right\Vert ^{2}}{\nu}\text{, }y=\left\Vert u_{\varepsilon
}\right\Vert _{1}^{2}.
\]

For $n=3$, $m=2$ and $\theta$ $=\frac{1}{2}$, in \cite[Formula (6.167)]{26},
we get $q_{\theta}=6$ wich means $u_{\varepsilon}\in L^{6}\left(
0,T;V_{1}\right)  $ then $u_{\varepsilon}\in L^{4}\left(  0,T;V_{1}\right)  ,$
thus
\[
a_{4}=\left\Vert u\right\Vert _{L^{4}\left(  0,T;V_{1}\right)  }.
\]
Thanks to $($\ref{15}$)$-$($\ref{19}$)$ we estimate the quantities $a_{1}$,
$a_{2}$, $a_{3}$ in Gronwall Lemma by%
\[
a_{1}=2c_{4}a_{4},\text{ }a_{2}=\frac{2r\left\Vert f\right\Vert ^{2}}{\nu
},\text{ }a_{3}=\frac{r\left\Vert f\right\Vert ^{2}}{\nu^{2}\lambda_{1}}%
+\frac{\left\Vert f\right\Vert ^{2}}{\nu^{3}\lambda_{1}^{2}}.
\]
Then we obtain
\[
\left\Vert u_{\varepsilon}\left(  t\right)  \right\Vert _{1}^{2}\leq
(\frac{a_{3}}{r}+a_{2})\exp\left(  a_{1}\right)  =R_{1}^{2}\text{ for }t\geq
t_{0},\text{ }t_{0}\text{ as in }(\text{\ref{16}})\text{.}%
\]
Hence, for any ball $B_{R_{1}}$, there exists a ball $B_{\delta_{1}}$, in
$V_{1}$ centered at origin with radius $R_{1}>\delta_{1}>\rho_{1}$ such that%
\[
S_{\varepsilon}(t)B_{R_{1}}\subset B_{\delta_{1}}\text{ for }t\geq
t_{1}\left(  B_{R_{0}}\right)  =t_{0}\left(  B_{R_{0}}\right)  +1+\frac{1}%
{\nu\lambda_{1}}\log\frac{R_{1}^{2}-\rho_{1}^{2}}{\delta_{1}^{2}-\rho_{1}^{2}%
}.
\]
The ball $B_{\delta_{1}}$ is said to be absorbing and invariant for the
semigroup $S_{\varepsilon}(t)$.

Furthermore, if $B$ is any bounded set of $V_{0}$, then $S_{\varepsilon
}(t)B\subset B_{\delta_{1}}$ for $t\geq t_{1}\left(  B,R_{0}\right)  $, this
shows the existence of an absorbing set in $V_{1}$. Since the embedding of
$V_{1}$ in $V_{0}$ is compact, we deduce that $S_{\varepsilon}(t)$ maps a
bounded set in $V_{0}$ into a compact set in $V_{0}$. In addition, the
operators $S_{\varepsilon}(t)$ are uniformly compact for $t\geq t_{1}\left(
B,R_{0}\right)  $. That is,%
\[%
{\textstyle\bigcup_{t\geq t_{1}}}
S_{\varepsilon}(t,0,B_{R_{0}})
\]
is relatively compact in $V_{0}$.

Due to a the standard procedure (cf., for example, \cite[Theorem I.1.1]{32}
for details), one can prove that there is a global attractor a compact
attractor $\mathfrak{A}_{\varepsilon}$ for the operators $S_{\varepsilon}(t)$
for $\varepsilon\geq0$,

Note that the global attractor $\mathfrak{A}_{\varepsilon}$ must be contained
in the absorbing balls $V_{0}$ and $V_{1}$%
\begin{equation}
\mathfrak{A}_{\varepsilon}=%
{\textstyle\bigcap_{t_{1}\geq0}}
\overline{%
{\textstyle\bigcup_{t\geq t_{1}}}
B_{\delta_{1}}\left(  t\right)  }\subset B_{\delta_{0}}\cap B_{\delta_{1}}.
\label{23}%
\end{equation}
Notice that all the above bounds are independent of $\varepsilon$.

\section{Estimates of Dimensions of the Global Attractor}

Our aim in this section is to study the finite dimensionality of the global
attractor. In the first part we will prove the differentiability property of
$S_{\varepsilon}\left(  t\right)  $ and in the second part we will provide
estimates of the fractal and Hausdorff dimensions of their global attractors
$\mathfrak{A}_{\varepsilon}$.

Using the trace formula \cite[Chapters V and VI]{32}, we estimate the
Hausdorff \ and the fractal dimensions of the global attractor $\mathfrak{A}%
_{\varepsilon}$ in $V$.

For a solution $u_{\varepsilon}\left(  t\right)  =S_{\varepsilon}\left(
t\right)  u_{\varepsilon0}$, $t\geq0$, lying on the attractor $u_{\varepsilon
0}\in\mathfrak{A}_{\varepsilon}$, we see from $($\ref{13}$)$ that the
linearized flow around $u_{\varepsilon}$ is given by the equation
\begin{equation}%
\begin{array}
[c]{r}%
U_{\varepsilon}^{\prime}+\varepsilon A^{2}U_{\varepsilon}+\nu AU_{\varepsilon
}+B\left(  u_{\varepsilon},U_{\varepsilon}\right)  +B\left(  U_{\varepsilon
},u_{\varepsilon}\right)  =0,\text{ in }V^{\prime}\\
U_{\varepsilon}\left(  0\right)  =\xi,\text{ in }V\text{.}%
\end{array}
\label{24}%
\end{equation}
We show the differentiability of the semigroup $S_{\varepsilon}$ with respect
to the initial data\ in the space $V$.

\begin{theorem}
\label{Theorem6}For any $t>0$, the function $u_{\varepsilon0}\rightarrow
u_{\varepsilon}\left(  t\right)  =S_{\varepsilon}\left(  t\right)
u_{\varepsilon0}$ is Fr\'{e}chet differentiable on the attractor
$\mathfrak{A}_{\varepsilon}$. Its differential is the linear operator
\[
D\left(  S_{\varepsilon}\left(  t\right)  u_{\varepsilon0}\right)  =L\left(
t,u_{\varepsilon0}\right)  :\xi\in V\rightarrow U_{\varepsilon}\left(
t\right)  \in V\text{,\ }t\in\left[  0,T\right]  \text{,}%
\]
where $U_{\varepsilon}\left(  t\right)  $ is the solution of $($\ref{24}$)$.
\end{theorem}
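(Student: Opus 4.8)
The plan is to establish Fr\'echet differentiability of $S_{\varepsilon}(t)$ on the attractor $\mathfrak{A}_{\varepsilon}$ by showing that the remainder
\[
\theta_{\varepsilon}(t) = S_{\varepsilon}(t)(u_{\varepsilon0}+\xi) - S_{\varepsilon}(t)u_{\varepsilon0} - U_{\varepsilon}(t)
\]
satisfies $\|\theta_{\varepsilon}(t)\|_{V}/\|\xi\|_{V} \to 0$ as $\|\xi\|_{V}\to 0$, uniformly for $u_{\varepsilon0}\in\mathfrak{A}_{\varepsilon}$ and $t\in[0,T]$. First I would set $v_{\varepsilon} = S_{\varepsilon}(t)(u_{\varepsilon0}+\xi)$, $u_{\varepsilon}=S_{\varepsilon}(t)u_{\varepsilon0}$, and $w_{\varepsilon}=v_{\varepsilon}-u_{\varepsilon}$; subtracting the two copies of $(\ref{13})$ gives an equation for $w_{\varepsilon}$, and subtracting further the linearized equation $(\ref{24})$ yields the equation for $\theta_{\varepsilon}$:
\begin{equation}
\theta_{\varepsilon}' + \varepsilon A^{2}\theta_{\varepsilon} + \nu A\theta_{\varepsilon} + B(u_{\varepsilon},\theta_{\varepsilon}) + B(\theta_{\varepsilon},u_{\varepsilon}) = -B(w_{\varepsilon},w_{\varepsilon}),\qquad \theta_{\varepsilon}(0)=0. \label{eq:rem}
\end{equation}
The forcing on the right collects the quadratic-in-$w_{\varepsilon}$ terms that come from the nonlinearity not being exactly linear.

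The key preliminary estimate is a Lipschitz bound: taking the inner product of the equation for $w_{\varepsilon}$ with $Aw_{\varepsilon}$ in $V_{0}$, using $b(u_{\varepsilon},w_{\varepsilon},Aw_{\varepsilon})$ and $b(w_{\varepsilon},v_{\varepsilon},Aw_{\varepsilon})$ bounds via Agmon's inequality $(\ref{6})$ exactly as in the $V_{1}$-absorbing-ball computation in Section 4, and using that both $u_{\varepsilon}$ and $v_{\varepsilon}$ stay in the absorbing ball $B_{\delta_{1}}\subset V_{1}$, one gets $\|w_{\varepsilon}(t)\|_{1}^{2}\le \|\xi\|_{1}^{2}\exp(c\int_{0}^{t}(1+\|u_{\varepsilon}\|_{1}^{4}+\|v_{\varepsilon}\|_{1}^{4})\,ds) \le C(T)\|\xi\|_{V}^{2}$, where $C(T)$ depends only on $\nu$, $\lambda_{1}$, $\|f\|$, $T$ (and not on $\varepsilon$), because the time-integral of $\|u_{\varepsilon}\|_{1}^{4}$ is controlled by $(\ref{18})$ together with the $L^{4}(0,T;V_{1})$ bound already invoked in Section 4. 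Here the $\varepsilon A^{2}$ terms only help — they produce a nonnegative $2\varepsilon\|A^{3/2}w_{\varepsilon}\|^{2}$ term that can be dropped.

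Next I would estimate $\theta_{\varepsilon}$ itself: take the $V_{0}$ inner product of $(\ref{eq:rem})$ with $A\theta_{\varepsilon}$, discard the nonnegative $\varepsilon\|A^{3/2}\theta_{\varepsilon}\|^{2}$, absorb $\nu\|A\theta_{\varepsilon}\|^{2}$ on the left, and estimate the four nonlinear terms by Agmon plus Young. The two terms $b(u_{\varepsilon},\theta_{\varepsilon},A\theta_{\varepsilon})$ and $b(\theta_{\varepsilon},u_{\varepsilon},A\theta_{\varepsilon})$ produce a factor $\|\theta_{\varepsilon}\|_{1}^{2}$ multiplied by something integrable in $t$ (again $\|u_{\varepsilon}\|_{1}^{4}$), and the right-hand term is bounded by $|b(w_{\varepsilon},w_{\varepsilon},A\theta_{\varepsilon})|\le \tfrac{\nu}{4}\|A\theta_{\varepsilon}\|^{2} + c\,\|w_{\varepsilon}\|_{1}^{2}\|w_{\varepsilon}\|\,\|Aw_{\varepsilon}\|$; the product $\|w_{\varepsilon}\|_{1}^{4}$ (crudely bounding $\|w_{\varepsilon}\|\le \lambda_{1}^{-1/2}\|w_{\varepsilon}\|_{1}$ and $\|Aw_{\varepsilon}\|$ by interpolation absorbed into the dissipation over an integrated estimate) is then $O(\|\xi\|_{V}^{4})$ in $L^{1}(0,T)$ by the previous step. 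Gronwall with $\theta_{\varepsilon}(0)=0$ then gives $\|\theta_{\varepsilon}(t)\|_{1}^{2}\le C'(T)\|\xi\|_{V}^{4}$, so $\|\theta_{\varepsilon}(t)\|_{V}=o(\|\xi\|_{V})$, which is exactly Fr\'echet differentiability with derivative $L(t,u_{\varepsilon0})$.

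The main obstacle is the bookkeeping of the nonlinear term $-B(w_{\varepsilon},w_{\varepsilon})$ in $V_{1}^{-1}$-type norms: one must choose the interpolation so that every factor of $\|w_{\varepsilon}\|_{1}$ or $\|Aw_{\varepsilon}\|$ is either a power with integrable time-bound from Section 4 or is absorbed into the $\nu\|A\theta_{\varepsilon}\|^{2}$ dissipation, while keeping the overall power of $\|\xi\|_{V}$ strictly above $1$. Since we are in 3D the critical scaling is tight, but because we are estimating in the $V_{1}$ norm and the solutions on $\mathfrak{A}_{\varepsilon}$ are as smooth as the $V_{1}$-absorbing ball allows, the Agmon-based estimates used for $(\ref{21})$ carry over verbatim; the linear operator $L(t,u_{\varepsilon0}):\xi\mapsto U_{\varepsilon}(t)$ is bounded on $V$ by the same Gronwall argument applied directly to $(\ref{24})$, so the differential is well defined and the theorem follows. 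All constants are independent of $\varepsilon$, consistent with the uniform bounds recorded after $(\ref{23})$.
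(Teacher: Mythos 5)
Your proposal is correct in outline and uses the same basic strategy as the paper --- form the remainder $\theta_{\varepsilon}=w_{\varepsilon}-U_{\varepsilon}$, derive its evolution equation with the quadratic forcing $-B(w_{\varepsilon},w_{\varepsilon})$, prove a Lipschitz bound on $w_{\varepsilon}$, and close with Gronwall to get $\|\theta_{\varepsilon}\|=O(\|\xi\|^{2})$ --- but you carry it out one level of regularity higher. The paper tests the $w_{\varepsilon}$- and $\eta$-equations with $w_{\varepsilon}$ and $\eta$ themselves, works entirely with the $V_{0}$ norm and the inequality $(\ref{4})$, and concludes $\|\eta\|^{2}\leq C_{0}\int_{0}^{t}\|w_{\varepsilon}\|_{1}^{4}\,ds\leq C_{2}\|\xi\|^{4}$ using the integrated bound $(\ref{32})$; you instead test with $Aw_{\varepsilon}$ and $A\theta_{\varepsilon}$, use the Agmon-based estimates from the $V_{1}$-absorbing-ball computation $(\ref{21})$, and obtain a pointwise-in-time bound $\sup_{t}\|w_{\varepsilon}(t)\|_{1}\leq C\|\xi\|_{1}$ followed by $\|\theta_{\varepsilon}(t)\|_{1}^{2}\leq C'\|\xi\|_{1}^{4}$. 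Your route costs more (it needs the solution and the perturbed solution to live in the $V_{1}$ absorbing ball and requires the slightly heavier trilinear estimates with $\|A\cdot\|$), but it buys differentiability in the stronger $V_{1}$ topology and, importantly, it avoids the delicate step of converting an $L^{1}(0,T)$ bound on $\|w_{\varepsilon}\|_{1}^{2}$ into one on $\|w_{\varepsilon}\|_{1}^{4}$, which the paper's $V_{0}$-level argument passes over rather quickly (the step from $(\ref{32})$ to $\int_{0}^{t}\|w_{\varepsilon}\|_{1}^{4}\,ds\leq C_{1}^{2}\|\xi\|^{4}$ really needs a uniform-in-time $V_{1}$ control of $w_{\varepsilon}$ of exactly the kind you establish). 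One small point in your favor: your remainder equation, with $B(u_{\varepsilon},\theta_{\varepsilon})+B(\theta_{\varepsilon},u_{\varepsilon})$ linearized around $u_{\varepsilon}$, is the exact one; the paper's version linearizes around $v_{\varepsilon}$, which differs by additional terms that are again quadratic in the perturbation and hence harmless, but your bookkeeping is cleaner. Do make the interpolation in the $|b(w_{\varepsilon},w_{\varepsilon},A\theta_{\varepsilon})|$ estimate explicit when you write it up, since that is where the $3$D scaling is tightest.
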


\begin{proof}
Let%
\[
\eta(t)=v_{\varepsilon}\left(  t\right)  -u_{\varepsilon}\left(  t\right)
-U_{\varepsilon}\left(  t\right)  \text{, }U_{\varepsilon}\left(  0\right)
=\xi=v_{\varepsilon0}-u_{\varepsilon0}.
\]
Clearly, $\eta$ satisfies%
\[
\eta_{t}+\varepsilon A^{2}\eta+\nu A\eta+B(\eta,v_{\varepsilon}%
)+B(v_{\varepsilon},\eta)-B(w_{\varepsilon},w_{\varepsilon})=0,\text{ }%
\eta(0)=0
\]
where $w_{\varepsilon}=v_{\varepsilon}-u_{\varepsilon}$. Taking the inner
product of the last equation with $\eta$ and using the identity
$B(v_{\varepsilon},\eta,\eta)=0$ we obtain%
\begin{equation}
\frac{d\left\Vert \eta\right\Vert ^{2}}{dt}+2\varepsilon\left\Vert
A\eta\right\Vert ^{2}+2\nu\left\Vert \eta\right\Vert _{1}^{2}=2b(\eta
,v_{\varepsilon},\eta)-2b(w_{\varepsilon},w_{\varepsilon},\eta). \label{25}%
\end{equation}
By $($\ref{4}$)$ the first term in the right-hand side of $($\ref{25}$)$ has
the estimate%
\begin{align*}
\left\vert 2b(\eta,v_{\varepsilon},\eta)\right\vert  &  \leq2c_{1}\left\Vert
\eta\right\Vert ^{\frac{1}{2}}\left\Vert \eta\right\Vert _{1}^{\frac{3}{2}%
}\left\Vert v_{\varepsilon}\right\Vert _{1}\\
&  \leq2c_{1}R_{1}\left\Vert \eta\right\Vert ^{\frac{1}{2}}\left\Vert
\eta\right\Vert _{1}^{\frac{3}{2}}\\
&  \leq\frac{c_{1}^{4}R_{1}^{4}}{\nu^{3}}\left\Vert \eta\right\Vert ^{2}%
+\frac{3\nu}{4}\left\Vert \eta\right\Vert _{1}^{2}.
\end{align*}
Employing the inequalities $($\ref{4}$)$ we estimate the second term in the
right hand side of $($\ref{25}$)$ as follows
\begin{align*}
2b(w_{\varepsilon},w_{\varepsilon},\eta)  &  \leq2c_{1}\left\Vert
\eta\right\Vert _{1}\left\Vert w_{\varepsilon}\right\Vert _{1}^{2}\\
&  \leq\frac{2c_{1}^{2}}{\nu}\left\Vert w_{\varepsilon}\right\Vert _{1}%
^{4}+\frac{\nu}{2}\left\Vert \eta\right\Vert _{1}^{2}.
\end{align*}
Hence, we obtain from $($\ref{25}$)$%
\[
\frac{d\left\Vert \eta\right\Vert ^{2}}{dt}+2\varepsilon\left\Vert
A\eta\right\Vert ^{2}+\frac{3\nu}{4}\left\Vert \eta\right\Vert _{1}^{2}%
\leq\frac{c_{1}^{4}R_{1}^{4}}{\nu^{3}}\left\Vert \eta\right\Vert ^{2}%
+\frac{2c_{1}^{2}}{\nu}\left\Vert w_{\varepsilon}\right\Vert _{1}^{4}%
\]
we drop the positive terms $2\varepsilon\left\Vert A\eta\right\Vert ^{2}$ and
$\frac{3\nu}{4}\left\Vert \eta\right\Vert _{1}^{2}$ we get%
\begin{equation}
\frac{d\left\Vert \eta\right\Vert ^{2}}{dt}\leq\frac{c_{1}^{4}R_{1}^{4}}%
{\nu^{3}}\left\Vert \eta\right\Vert ^{2}+\frac{2c_{1}^{2}}{\nu}\left\Vert
w_{\varepsilon}\right\Vert _{1}^{4}. \label{26}%
\end{equation}
From the classical Gronwall Lemma (see \cite{33}), $($\ref{26}$)$ gives
\[
\left\Vert \eta\right\Vert ^{2}\leq\frac{2c_{1}^{2}}{\nu}\int_{0}%
^{t}\left\Vert w_{\varepsilon}\right\Vert _{1}^{4}\exp(\int_{s}^{t}\frac
{c_{1}^{4}R_{1}^{4}}{\nu^{3}}d\tau)ds.
\]
Thus
\begin{equation}
\left\Vert \eta\right\Vert ^{2}\leq C_{0}\int_{0}^{t}\left\Vert w_{\varepsilon
}\right\Vert _{1}^{4}ds,\text{ }C_{0}=\frac{2c_{1}^{2}}{\nu}\exp(\frac
{Tc_{1}^{4}R_{1}^{4}}{\nu^{3}}). \label{27}%
\end{equation}
The difference%
\[
w_{\varepsilon}\left(  t\right)  =v_{\varepsilon}\left(  t\right)
-u_{\varepsilon}\left(  t\right)  =S_{\varepsilon}\left(  t\right)
v_{\varepsilon0}-S_{\varepsilon}(t)u_{\varepsilon0}%
\]
satisfies the equation%
\begin{equation}
\frac{dw_{\varepsilon}}{dt}+\varepsilon A^{2}w_{\varepsilon}+\nu
Aw_{\varepsilon}+B(w_{\varepsilon},v_{\varepsilon})+B(v_{\varepsilon
},w_{\varepsilon})-B(w_{\varepsilon},w_{\varepsilon})=0 \label{27a}%
\end{equation}
and
\[
w_{\varepsilon}(0)=v_{\varepsilon0}-u_{\varepsilon0}=w_{\varepsilon0}.
\]
Taking the inner product of the last equation with $w_{\varepsilon}$,we
obtain
\begin{equation}
\frac{d}{dt}\left\Vert w_{\varepsilon}\right\Vert ^{2}+2\varepsilon\left\Vert
Aw_{\varepsilon}\right\Vert ^{2}+2\nu\left\Vert w_{\varepsilon}\right\Vert
_{1}^{2}=2b(w_{\varepsilon},w_{\varepsilon},v_{\varepsilon}). \label{28}%
\end{equation}
By using inequalities $($\ref{4}$)$ and Young's inequality we obtain
\begin{align*}
\left\vert 2b(w_{\varepsilon},v_{\varepsilon},w_{\varepsilon})\right\vert  &
\leq2c_{1}\left\Vert v_{\varepsilon}\right\Vert _{1}\left\Vert w_{\varepsilon
}\right\Vert _{1}^{\frac{3}{2}}\left\Vert w_{\varepsilon}\right\Vert
^{\frac{1}{2}}\\
&  \leq\frac{c_{1}^{4}R_{1}^{4}}{\nu^{3}}\left\Vert w_{\varepsilon}\right\Vert
^{2}+\frac{3\nu}{4}\left\Vert w_{\varepsilon}\right\Vert _{1}^{2}.
\end{align*}
Substituting the above result into $($\ref{28}$)$, we obtain%
\begin{equation}
\frac{d}{dt}\left\Vert w_{\varepsilon}\right\Vert ^{2}+2\varepsilon\left\Vert
Aw_{\varepsilon}\right\Vert ^{2}+\frac{5\nu}{4}\left\Vert w_{\varepsilon
}\right\Vert _{1}^{2}\leq\frac{c_{1}^{4}R_{1}^{4}}{\nu^{3}}\left\Vert
w_{\varepsilon}\right\Vert ^{2}\text{.} \label{29}%
\end{equation}
We drop the positive terms $2\varepsilon\left\Vert Aw_{\varepsilon}\right\Vert
^{2}$ and $\frac{5\nu}{4}\left\Vert w_{\varepsilon}\right\Vert _{1}^{2}$ to
obtain the following differential inequality%
\begin{equation}
\frac{d}{dt}\left\Vert w_{\varepsilon}\right\Vert ^{2}\leq\frac{c_{1}^{4}%
R_{1}^{4}}{\nu^{3}}\left\Vert w_{\varepsilon}\right\Vert ^{2}. \label{30}%
\end{equation}
Using the classical Gronwall Lemma we deduce from $($\ref{30}$)$ that%
\begin{equation}
\left\Vert w_{\varepsilon}\right\Vert ^{2}\leq\left\Vert w_{\varepsilon
}\left(  0\right)  \right\Vert ^{2}\exp(\frac{Tc_{1}^{4}R_{1}^{4}}{\nu^{3}%
})\text{.} \label{31}%
\end{equation}
From $($\ref{31}$)$ we deduce that%
\begin{equation}
\int_{0}^{t}\left\Vert u_{\varepsilon}\left(  t\right)  -v_{\varepsilon
}\left(  t\right)  \right\Vert _{1}^{2}dt\leq C_{1}\left\Vert u_{\varepsilon
0}-v_{\varepsilon0}\right\Vert ^{2}\text{; }C_{1}=\frac{4}{5\nu}T\exp
(\frac{Tc_{1}^{4}R_{1}^{4}}{\nu^{3}}), \label{32}%
\end{equation}
with $($\ref{27}$)$ we conclude that
\[
\left\Vert \eta\right\Vert ^{2}\leq C_{0}C_{1}^{2}\left\Vert u_{\varepsilon
0}-v_{\varepsilon0}\right\Vert ^{4},
\]
then we deduce from $($\ref{27}$)$ and $($\ref{32}$)$ that%
\begin{equation}
\left\Vert \eta\right\Vert ^{2}\leq C_{2}\left\Vert w_{\varepsilon}\left(
0\right)  \right\Vert ^{4}\text{, where\ }C_{2}=C_{0}C_{1}^{2} \label{33}%
\end{equation}
this shows that%
\[
\frac{\left\Vert v_{\varepsilon}(t)-u_{\varepsilon}(t)-U_{\varepsilon
}(t)\right\Vert ^{2}}{\left\Vert v_{\varepsilon0}-u_{\varepsilon0}\right\Vert
^{2}}\leq C_{2}\left\Vert v_{\varepsilon0}-u_{\varepsilon0}\right\Vert
^{2}\rightarrow0\text{ as }\left\Vert v_{\varepsilon0}-u_{\varepsilon
0}\right\Vert _{1}\rightarrow0\text{, on }\mathfrak{A}_{\varepsilon}\text{.}%
\]
The differentiability of $S_{\varepsilon}\left(  t\right)  $ is proved.
\end{proof}

From Theorem \ref{Theorem6}\ the function $S_{\varepsilon}\left(  t\right)  $
is Fr\'{e}chet differentiable on $\mathfrak{A}_{\varepsilon}$ for $t>0$.

For $\xi\in V_{0}$, there exists a unique solution $U_{\varepsilon}$ of
$($\ref{24}$)$ satisfies%
\[
U_{\varepsilon}\in C\left(  \left[  0,T\right]  ;V_{0}\right)  \cap
L^{2}\left(  0,T;V_{2}\right)  \text{ \ }\forall T>0.
\]

With the differentiability ensured in Theorem \ref{Theorem5} we can then
define a linear map $L\left(  t;u_{\varepsilon0}\right)  :\xi\in
V_{0}\rightarrow U_{\varepsilon}\left(  t\right)  \in V_{0}$ where
$U_{\varepsilon}$ is the solution of $($\ref{24}$)$.

We can apply the trace formula (see \cite{8} and \cite[Section V. 3]{32}) to
find a bound on the dimension of the global attractor $\mathfrak{A}%
_{\varepsilon}$. We consider the trace $TrF^{\prime}\left(  u_{\varepsilon
}\right)  $ of the linear operator $F^{\prime}\left(  u_{\varepsilon}\right)
$\ and for\ $m\in%
\mathbb{N}
$, the number%
\[
q_{m}=\lim\sup_{t\rightarrow\infty}\sup_{u_{\varepsilon0}\in A}\sup
_{\substack{\xi_{1}\in V_{0}\\\left\vert \xi_{1}\right\vert \leq
1\\i=1,...,m}}\frac{1}{t}\int_{0}^{t}TrF^{\prime}(S_{\varepsilon}\left(
\tau\right)  u_{\varepsilon0})\circ Q_{m}\left(  \tau\right)  d\tau
\]
where $Q_{m}\left(  \tau\right)  =Q_{m}\left(  \tau,u_{\varepsilon0};\xi
_{1},...,\xi_{m}\right)  $ is the orthogonal projector in $V_{0}$ onto the
space spanned by $U_{\varepsilon}^{1}\left(  \tau\right)  ,...,U_{\varepsilon
}^{m}\left(  \tau\right)  $. where $U_{\varepsilon}^{j}\left(  \tau\right)  $
$=L\left(  \tau,u_{\varepsilon0}\right)  .\xi_{j}$, $j=1,...,m$, $t\geq0,$
$\ $are $m$ solutions of $($\ref{24}$)$, corresponding to $\xi=\xi_{1}%
,...,\xi_{m}\in V_{1}$. Let $\varphi_{j}\left(  \tau\right)  $, $j=1,...,m$,
$\tau\geq0$, be an orthonormal basis of for $\tilde{Q}_{m}\left(  \tau\right)
V_{0}=$span $\left\{  U_{\varepsilon}^{1}\left(  \tau\right)
,...,U_{\varepsilon}^{m}\left(  \tau\right)  \right\}  $, $\varphi_{j}\left(
t\right)  \in V_{1}$ for $j=1,...,m$, since $U_{\varepsilon}^{1}\left(
\tau\right)  ,...,U_{\varepsilon}^{m}\left(  \tau\right)  \in V_{1}$, $\tau\in%
\mathbb{R}
^{+}$.

From the general result in \cite[Section V.3.41]{32}, we have that if
$q_{m}<0$ for some $m\in N$ then the global attractor has finite Hausdorff and
fractal dimensions estimated respectively as%
\begin{align}
\dim_{H}\left(  \mathfrak{A}_{\varepsilon}\right)   &  \leq m,\label{34}\\
\dim_{F}\left(  \mathfrak{A}_{\varepsilon}\right)   &  \leq m(1+\max_{1\leq
j\leq m-1}\frac{\left(  q_{j}\right)  _{+}}{\left\Vert q_{m}\right\Vert }).
\label{35}%
\end{align}
Then we have%
\begin{align*}
TrF^{\prime}\left(  S_{\varepsilon}\left(  \tau\right)  u_{\varepsilon
0}\right)  \circ Q_{m}\left(  \tau\right)   &  =%
{\textstyle\sum_{j=1}^{\infty}}
\left(  TrF^{\prime}\left(  u_{\varepsilon}\left(  \tau\right)  \right)  \circ
Q_{m}\left(  \tau\right)  \varphi_{j}\left(  \tau\right)  ,\varphi_{j}\left(
\tau\right)  \right) \\
&  =%
{\textstyle\sum\limits_{j=1}^{m}}
\left(  F^{\prime}\left(  u_{\varepsilon}\left(  \tau\right)  \right)
\varphi_{j}\left(  \tau\right)  ,\varphi_{j}\left(  \tau\right)  \right)  .
\end{align*}
Recall that $(.,.)$ denoting the scalar product in $V_{0}$, we write using
$($\ref{2}$)$ and $($\ref{3}$)$%

\begin{align*}
Tr(F^{\prime}\left(  u_{\varepsilon}\left(  \tau\right)  \right)  \varphi
_{j}\left(  \tau\right)  ,\varphi_{j}\left(  \tau\right)  )  &  =%
{\textstyle\sum\limits_{j=1}^{m}}
\left(  -\varepsilon A^{2}\varphi_{j}-\nu A\varphi_{j}-B(\varphi
_{j},u_{\varepsilon})-B(u_{\varepsilon},\varphi_{j}),\varphi_{j}\right) \\
&  =%
{\textstyle\sum\limits_{j=1}^{m}}
(-\varepsilon\left\Vert A\varphi_{j}\right\Vert ^{2}-\nu\Vert A^{\frac{1}{2}%
}\varphi_{j}\Vert^{2}-b\left(  u_{\varepsilon},\varphi_{j},\varphi_{j}\right)
-b\left(  \varphi_{j},u_{\varepsilon},\varphi_{j}\right)  )
\end{align*}
thus%
\begin{equation}
Tr\left(  F^{\prime}\left(  u_{\varepsilon}\left(  \tau\right)  \right)
\varphi_{j}\left(  \tau\right)  ,\varphi_{j}\left(  \tau\right)  \right)  =%
{\textstyle\sum_{j=1}^{m}}
(-\varepsilon\left\Vert \varphi_{j}\right\Vert _{2}^{2}-\nu\left\Vert
\varphi_{j}\right\Vert _{1}^{2}-b\left(  \varphi_{j},u_{\varepsilon}%
,\varphi_{j}\right)  ). \label{36}%
\end{equation}
We estimate the nonlinear term as follows%
\begin{equation}
\mid%
{\textstyle\sum\limits_{j=1}^{m}}
b\left(  \varphi_{j},u,\varphi_{j}\right)  \mid=\mid%
{\textstyle\sum\limits_{j=1}^{m}}
\int_{\Omega}%
{\textstyle\sum\limits_{i,k=1}^{3}}
\varphi_{ji}\frac{\partial u_{_{k}}}{\partial x_{i}}\left(  x\right)
\varphi_{jk}dx\mid\label{38}%
\end{equation}
whence for almost every $x\in\Omega$ we have%
\[
\mid%
{\textstyle\sum\limits_{j=1}^{m}}
{\textstyle\sum\limits_{i,k=1}^{3}}
\varphi_{ji}\frac{\partial u_{_{k}}}{\partial x_{i}}\left(  x\right)
\varphi_{jk}dx\mid\leq\left\Vert u\right\Vert _{1}\left\Vert \rho\right\Vert
\]
where%
\[
\left\Vert u\left(  x\right)  \right\Vert _{1}=(%
{\textstyle\sum\limits_{i,k=1}^{3}}
\left\Vert D_{i}u_{_{k}}\left(  x\right)  \right\Vert ^{2})^{\frac{1}{2}}%
\]
and
\begin{equation}
\rho\left(  x\right)  =%
{\textstyle\sum\limits_{j=1}^{m}}
{\textstyle\sum\limits_{i=1}^{3}}
\left(  \varphi_{ji}\left(  x\right)  \right)  ^{2}. \label{39}%
\end{equation}
Therefore%
\begin{equation}
\mid%
{\textstyle\sum\limits_{j=1}^{m}}
b\left(  \varphi_{j},u_{\varepsilon},\varphi_{j}\right)  \mid\leq\int_{\Omega
}\rho\left(  x\right)  \left\Vert u_{\varepsilon}\left(  x\right)  \right\Vert
_{1}dx. \label{37}%
\end{equation}

Now we recall the generalized form of the Lieb--Thirring inequality in
dimension three and $m=l$ as developed in \cite[Theorem A4.1]{32}

\begin{theorem}
(The Lieb--Thirring inequality). Let $\varphi_{j}$, $1\leq j\leq N$ be a
finite family of $V_{l}$ wich is orthonormal in $L^{2}(\Omega)$ and set, for
every $x\in\Omega,$%
\[
\rho\left(  x\right)  =%
{\textstyle\sum\limits_{j=1}^{N}}
\left\Vert \left(  \varphi_{j}\left(  x\right)  \right)  \right\Vert ^{2}%
\]
Then there exists a constant $\kappa$, independent of the family $\varphi_{j}$
and of $N$ such that%
\begin{equation}
(\int_{\Omega}\rho\left(  x\right)  ^{q/q-1}dx)^{2l(q-1)/3}\leq\kappa%
{\textstyle\sum\limits_{j=1}^{N}}
\int_{\Omega}a(\varphi_{j},\varphi_{j}). \label{41}%
\end{equation}
for all $q\in max\{(1,3/2l),(1+3/2l)\}$ and where $\kappa$ depends on $l$,
$p$, and $q$, and on the shape (but not the size) of $\Omega$.
\end{theorem}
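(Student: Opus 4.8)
The plan is to obtain the inequality by interpolating between two endpoint choices of the exponent $q$: the ``easy'' endpoint $q=\max\{1,\tfrac{3}{2l}\}$, handled by a Sobolev estimate, and the ``hard'' endpoint $q=1+\tfrac{3}{2l}$, which is the genuine Lieb--Thirring inequality and will be proved by duality with the polyharmonic Schr\"odinger operator $(-\triangle)^{l}-V$. One may first reduce to a scalar problem: on the periodic box the Stokes operator acts componentwise as $-\triangle$ on divergence-free zero-average fields, so $a(\varphi_{j},\varphi_{j})=\Vert A^{l/2}\varphi_{j}\Vert^{2}$ is equivalent to $\Vert\varphi_{j}\Vert_{l}^{2}$ and is genuinely a norm thanks to the zero-average constraint; a scalar potential and $(-\triangle)^{l}$ act diagonally on $L^{2}(\Omega)^{3}$, so both the eigenvalue counting below and the componentwise Sobolev estimates reduce the vector inequality to its scalar version at the cost of an absolute factor in $\kappa$. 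Writing $E=\sum_{j}a(\varphi_{j},\varphi_{j})$, $\rho=\sum_{j}|\varphi_{j}|^{2}$ and $p=\tfrac{q}{q-1}$, the claim $\big(\int_{\Omega}\rho^{p}\,dx\big)^{2l/(3(p-1))}\le\kappa E$ is the form dictated by the scaling $\varphi_{j}(x)\mapsto\lambda^{3/2}\varphi_{j}(\lambda x)$, which leaves both sides invariant and carries no factor of $N$; the admissible range of $q$ corresponds to $p$ running between $1+\tfrac{2l}{3}$ (the hard endpoint) and $\infty$, resp.\ $\tfrac{3}{3-2l}$ when $l\le\tfrac{3}{2}$ (the easy endpoint).

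For the easy endpoint suppose first $l>\tfrac{3}{2}$, so $V_{l}\hookrightarrow L^{\infty}$. For each $x_{0}\in\Omega$ the ``coherent state'' $g_{x_{0}}=\rho(x_{0})^{-1/2}\sum_{j}\varphi_{j}(x_{0})\varphi_{j}$ has $\Vert g_{x_{0}}\Vert=1$ and, by Cauchy--Schwarz, $\Vert g_{x_{0}}\Vert_{l}\le E^{1/2}$; hence $\rho(x_{0})=|g_{x_{0}}(x_{0})|^{2}\le\Vert g_{x_{0}}\Vert_{\infty}^{2}\le c\,\Vert g_{x_{0}}\Vert_{l}^{3/l}\le c\,E^{3/(2l)}$, i.e.\ $\Vert\rho\Vert_{\infty}^{2l/3}\le c\,E$ with $\kappa$ free of $N$. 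If instead $l\le\tfrac{3}{2}$, the triangle inequality together with the Sobolev embedding $V_{l}\hookrightarrow L^{6/(3-2l)}$ gives directly $\Vert\rho\Vert_{3/(3-2l)}\le\sum_{j}\Vert\varphi_{j}\Vert_{6/(3-2l)}^{2}\le c\,E$.

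For the hard endpoint the input is the polyharmonic Lieb--Thirring bound $\sum_{k}|\mu_{k}^{-}((-\triangle)^{l}-V)|\le L_{l}\int_{\Omega}V_{+}(x)^{1+3/(2l)}\,dx$ on the negative eigenvalues of $(-\triangle)^{l}-V$; on the torus this follows from the Euclidean statement by periodizing the resolvent kernel of $(-\triangle)^{l}+s$ (whose on-diagonal value in dimension three is $\simeq s^{-1+3/(2l)}$ when $2l>3$), through the Birman--Schwinger principle for $K_{s}=V_{+}^{1/2}((-\triangle)^{l}+s)^{-1}V_{+}^{1/2}$ and an integration in $s$; this is \cite[Theorem A4.1]{32}. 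Granting it, for any $L^{2}$-orthonormal family $\varphi_{1},\dots,\varphi_{N}$ and any $V\ge0$ the sum $\sum_{j}\langle((-\triangle)^{l}-V)\varphi_{j},\varphi_{j}\rangle$ dominates the sum of all negative eigenvalues of $(-\triangle)^{l}-V$, so $E\ge\int_{\Omega}V\rho\,dx-L_{l}\int_{\Omega}V_{+}^{1+3/(2l)}\,dx$; taking $V=c_{0}\rho^{2l/3}$, the pointwise maximiser of $t\mapsto t\rho-L_{l}t^{1+3/(2l)}$, yields $E\ge c_{1}\int_{\Omega}\rho^{1+2l/3}\,dx$, which is the asserted inequality at $q=1+\tfrac{3}{2l}$ because there $2l(q-1)/3=1$. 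I expect this polyharmonic Lieb--Thirring bound --- which uses the orthonormality in an essential way and, when $2l>3$ (as for $l=2$), cannot be replaced by the Cwikel--Lieb--Rozenblum counting estimate --- to be the main obstacle; the remaining steps are soft.

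Finally, for the intermediate $q$ the exponent $p=\tfrac{q}{q-1}$ lies strictly between $p_{0}:=1+\tfrac{2l}{3}$ and the easy-endpoint exponent, so H\"older's inequality $\Vert\rho\Vert_{p}\le\Vert\rho\Vert_{p_{0}}^{1-\theta}\Vert\rho\Vert_{\infty}^{\theta}$ with $\tfrac{1}{p}=\tfrac{1-\theta}{p_{0}}$ (and the obvious analogue with $L^{3/(3-2l)}$ replacing $L^{\infty}$ when $l\le\tfrac{3}{2}$), combined with $\int_{\Omega}\rho^{p_{0}}\,dx\le\kappa_{0}E$ and $\Vert\rho\Vert_{\infty}\le\kappa_{\infty}E^{3/(2l)}$, yields after raising to the power $\tfrac{2l}{3(p-1)}$ exactly $\big(\int_{\Omega}\rho^{p}\,dx\big)^{2l(q-1)/3}\le\kappa E$. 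A short computation shows that the exponent of $E$ collapses to $1$, that no power of $N$ survives, and that $\kappa$ depends only on $l$, $p$, $q$ and the shape of $\Omega$, the sole domain-dependent ingredient being the periodization of the resolvent kernel in the hard endpoint.
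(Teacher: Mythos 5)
The paper does not actually prove this statement: it is quoted (with some typographical corruption) from Temam \cite[Theorem A4.1]{32}, so there is no internal proof to compare yours against. Your outline is, in substance, the standard argument by which that cited theorem is established in the literature (Lieb--Thirring, generalized by Ghidaglia, Marion and Temam and reproduced in Temam's appendix): reduce to the scalar case, prove the hard endpoint $q=1+\tfrac{3}{2l}$ by duality with $(-\triangle)^{l}-V$ and the bound on the sum of its negative eigenvalues, prove the easy endpoint by Agmon/Sobolev, and interpolate. Your exponent bookkeeping is correct: at $q=1+\tfrac{3}{2l}$ one has $2l(q-1)/3=1$ and the choice $V=c_{0}\rho^{2l/3}$ closes the Legendre-transform step, and in the interpolation the power of $E=\sum_{j}a(\varphi_{j},\varphi_{j})$ does collapse to $1$ with no surviving factor of $N$. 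Two caveats. First, the entire substance of the theorem is the polyharmonic moment bound $\sum_{k}\vert\mu_{k}^{-}((-\triangle)^{l}-V)\vert\leq L_{l}\int_{\Omega}V_{+}^{1+3/(2l)}$, which you correctly single out as the main obstacle but only sketch (Birman--Schwinger plus the on-diagonal resolvent estimate); since that bound \emph{is} essentially the content of the result the paper imports, your proof ultimately defers to the same external source rather than replacing it. Second, at $l=\tfrac{3}{2}$ the embedding $V_{l}\hookrightarrow L^{6/(3-2l)}$ degenerates, so the easy endpoint must be taken in the open range $q>\max(1,\tfrac{3}{2l})$ with constants depending on $q$; this is harmless for the paper's application ($l=2$). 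You are also right to note implicitly what the printed statement obscures: the exponent condition should read $\max(1,\tfrac{3}{2l})<q\leq 1+\tfrac{3}{2l}$, the constant cannot depend on a parameter $p$ that does not occur, and coercivity of the form $a$ on $V_{l}$ (needed for the right-hand side to control the $H^{l}$ norm) comes from the zero-average constraint.
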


The quadratic form we will use is
\begin{equation}
a(v,u)=(A^{l}v,u)=(A^{l/2}v,A^{l/2}u) \label{40}%
\end{equation}
so that the order of our quadratic form is $l$.

Kolmogorov's mean rate of dissipation of energy in turbulent flow (see e.g.
\cite[VI.(3.20)]{11,16,32}) is defined as%
\begin{equation}
\epsilon=\lambda_{1}^{\frac{3}{2}}\nu\lim\sup_{t\rightarrow\infty}%
\sup_{u_{\varepsilon0}\in\mathfrak{A}_{\varepsilon}}\frac{1}{t}\int_{0}%
^{t}\left\Vert u_{\varepsilon}\left(  \tau\right)  \right\Vert _{1}^{2}%
d\tau\label{42}%
\end{equation}
the maximal mean rate of dissipation of energy on the attractor, which is
finite thanks to $($\ref{20}$)$.

Using $($\ref{20}$)$ we can estimate the energy dissipation flux $\epsilon$ by%
\begin{equation}
\epsilon\leq\frac{\lambda_{1}^{\frac{1}{2}}\left\Vert f\right\Vert ^{2}}{\nu}.
\label{43}%
\end{equation}
In order to make the dimension estimate more explicit, we can estimate the
energy dissipation flux $\epsilon$ in terms of $G$ by%
\begin{equation}
\epsilon\leq\lambda_{1}^{2}\nu^{3}G^{2}. \label{44}%
\end{equation}

\section{Numbers of degrees of freedom in turbulent flows}

In this Section, we estimate the effects of hyperviscosity on the turbulent
flow. An argument from the classical theory of turbulence (see, L. Landau and
Lifshitz \cite{22}) suggests that there are finitely many degrees of freedom
in turbulent flows. Heuristic physical arguments are used to justify this
assertion and to provide an estimate for this number of degrees of freedom by
dividing a typical length scale of the flow, $l_{0}=\lambda_{1}^{-\frac{1}{2}%
}$, by the Kolmogorov dissipation length scale $l_{\epsilon}$ i.e.
$l_{\epsilon}=\frac{\nu^{3}}{\epsilon}$ where $\epsilon$\ is Kolmogorov's mean
rate of dissipation of energy in turbulent flow and taking the third power in 3D.

We will express our primary attractor results in terms of the Kolmogorov
length-scale $l_{\epsilon}$ and the Landau-Lifschitz estimates \cite{22} of
the number of degrees of freedom in turbulent flow \cite{11, 32} and we can
easily observe such compatibility that exists between these estimates and the
number of degrees of freedom in turbulence (see also \cite{22}). Such
estimates will give us useful information about the capability of $($%
\ref{13}$)$ to approximate Navier-Stokes equations dynamics. We will show that
the corresponding number of degrees of freedom is proportional to the
dimension of the global attractor.

By Holder's inequality the right hand side of $($\ref{37}$)$ can be estimated
as follow%
\begin{equation}
\int_{\Omega}\left\Vert u_{\varepsilon}\left(  x\right)  \right\Vert _{1}%
\rho\left(  x\right)  dx\leq\left\Vert \rho\left(  x\right)  \right\Vert
_{L^{\frac{7}{3}}\left(  \Omega\right)  }\parallel A^{\frac{1}{2}%
}u_{\varepsilon}\left(  x\right)  \parallel_{L^{\frac{7}{4}}\left(
\Omega\right)  } \label{58}%
\end{equation}
Applying Young's inequality with
\begin{equation}
p=\frac{7}{3},\text{ }q=\frac{7}{4},\text{ }\sigma=\frac{7\varepsilon}%
{6\kappa} \label{59}%
\end{equation}
we obtain%
\begin{equation}
\int_{\Omega}\left\Vert u_{\varepsilon}\left(  x\right)  \right\Vert _{1}%
\rho\left(  x\right)  dx\leq\frac{\varepsilon}{2\kappa}\left\Vert \rho\left(
x\right)  \right\Vert _{L^{\frac{7}{3}}\left(  \Omega\right)  }^{\frac{7}{3}%
}+c_{5}\parallel A^{\frac{1}{2}}u_{\varepsilon}\left(  x\right)
\parallel_{L^{\frac{7}{4}}\left(  \Omega\right)  }^{^{\frac{7}{4}}},\text{
}c_{5}=\frac{4}{7}(\frac{7\varepsilon}{6\kappa})^{-\frac{3}{4}}\text{\ \ \ \ }%
. \label{45}%
\end{equation}
Using $($\ref{45}$)$ we can majorize $TrF^{\prime}\left(  u_{\varepsilon
}\left(  \tau\right)  \right)  \circ\tilde{Q}_{m}\left(  \tau\right)  $ as
follows
\begin{align}
TrF^{\prime}\left(  u_{\varepsilon}\left(  \tau\right)  \right)  \circ
\tilde{Q}_{m}\left(  \tau\right)   &  \leq-\nu%
{\textstyle\sum\limits_{j=1}^{m}}
\left\Vert \varphi_{j}\left(  x\right)  \right\Vert _{1}^{2}-\varepsilon%
{\textstyle\sum\limits_{j=1}^{m}}
\left\Vert \varphi_{j}\left(  \tau\right)  \right\Vert _{2}^{2}+\frac
{\varepsilon}{2\kappa}\left\Vert \rho\left(  x\right)  \right\Vert
_{L^{\frac{7}{3}}\left(  \Omega\right)  }^{\frac{7}{3}}\label{46}\\
+c_{5}  &  \parallel A^{\frac{1}{2}}u_{\varepsilon}\left(  x\right)
\parallel_{L^{\frac{7}{4}}\left(  \Omega\right)  }^{^{\frac{7}{4}}}\text{.}%
\end{align}
Applying the Lieb--Thirring inequality (%
\ref{41}
) we obtain%

\[
TrF^{\prime}\left(  u_{\varepsilon}\left(  \tau\right)  \right)  \circ
\tilde{Q}_{m}\left(  \tau\right)  \leq-\frac{\nu}{2}%
{\textstyle\sum\limits_{j=1}^{m}}
\left\Vert \varphi_{j}\left(  x\right)  \right\Vert _{1}^{2}-\frac
{\varepsilon}{2}%
{\textstyle\sum\limits_{j=1}^{m}}
\left\Vert \varphi_{j}\left(  \tau\right)  \right\Vert _{2}^{2}+c_{5}\parallel
A^{\frac{1}{2}}u_{\varepsilon}\left(  x\right)  \parallel_{L^{\frac{7}{4}%
}\left(  \Omega\right)  }^{^{\frac{7}{4}}}.
\]
The Sobolev embedding $V_{2}\subset V_{1}$ the Sobolev inequalities on $%
\Omega
$ in terms of
\[
\parallel\varphi_{j}\left(  x\right)  \parallel_{1}\leq c_{6}\parallel
\varphi_{j}\left(  x\right)  \parallel_{2}%
\]
we get%
\begin{equation}
TrF^{\prime}\left(  u_{\varepsilon}\left(  \tau\right)  \right)  \circ
\tilde{Q}_{m}\left(  \tau\right)  \leq-(\frac{\nu}{2}+\frac{\varepsilon
}{2c_{6}})%
{\textstyle\sum\limits_{j=1}^{m}}
\left\Vert \varphi_{j}\left(  x\right)  \right\Vert _{1}^{2}+c_{5}\parallel
A^{\frac{1}{2}}u_{\varepsilon}\left(  x\right)  \parallel_{L^{\frac{7}{4}%
}\left(  \Omega\right)  }^{^{\frac{7}{4}}}. \label{60}%
\end{equation}
then,%
\begin{equation}
TrF^{\prime}\left(  u_{\varepsilon}\left(  \tau\right)  \right)  \circ
\tilde{Q}_{m}\left(  \tau\right)  \leq-c_{7}%
{\textstyle\sum\limits_{j=1}^{m}}
\left\Vert \varphi_{j}\left(  x\right)  \right\Vert _{1}^{2}+c_{5}\parallel
A^{\frac{1}{2}}u_{\varepsilon}\left(  x\right)  \parallel_{L^{\frac{7}{4}%
}\left(  \Omega\right)  }^{^{\frac{7}{4}}}\text{, where }c_{7}=\frac{\nu}%
{2}+\frac{\varepsilon}{2c_{6}}. \label{61}%
\end{equation}

Note that in the 3D case we have $\lambda_{j}\geq c_{8}L^{-2}j^{\frac{2}{3}}$
for some positive universal constant (see, for example \cite[Lemma VI 2.1]%
{32}). Therefore,%
\begin{equation}%
{\textstyle\sum\limits_{j=1}^{m}}
\left\Vert \varphi_{j}\left(  x\right)  \right\Vert _{1}^{2}\geq\lambda
_{1}+...+\lambda_{m}\geq c_{9}\lambda_{1}m^{\frac{5}{3}}. \label{47}%
\end{equation}
For the term $\parallel A^{\frac{1}{2}}u_{\varepsilon}\left(  x\right)
\parallel_{L^{\frac{7}{4}}\left(  \Omega\right)  }^{^{\frac{7}{4}}}$, we have
by Holder's inequality that%
\begin{equation}
\parallel A^{\frac{1}{2}}u_{\varepsilon}\left(  x\right)  \parallel
_{L^{\frac{7}{4}}\left(  \Omega\right)  }^{^{\frac{7}{4}}}\leq c_{10}\parallel
A^{\frac{1}{2}}u_{\varepsilon}\left(  x\right)  \parallel^{^{\frac{7}{4}}%
}\text{ with }c_{10}=\left\vert \Omega\right\vert ^{\frac{1}{8}} \label{48}%
\end{equation}
Taking into account $($\ref{46}$)$ then yields
\begin{equation}
TrF^{\prime}\left(  u_{\varepsilon}\left(  \tau\right)  \right)  \circ
Q_{m}\left(  \tau\right)  d\tau\leq-c_{7}c_{9}\lambda_{1}m^{\frac{5}{3}}%
+c_{5}c_{10}\parallel A^{\frac{1}{2}}u_{\varepsilon}\left(  x\right)
\parallel^{\frac{7}{4}}. \label{49}%
\end{equation}
By H\"{o}lder's inequality we have%
\begin{equation}
\lim\sup_{t\rightarrow\infty}\sup_{u_{\varepsilon0}\in\mathfrak{A}%
_{\varepsilon}}\frac{1}{t}\int_{0}^{t}\parallel A^{\frac{1}{2}}u_{\varepsilon
}\left(  \tau,x\right)  \parallel^{\frac{7}{4}}d\tau\leq\lim\sup
_{t\rightarrow\infty}(\sup_{u_{\varepsilon0}\in\mathfrak{A}_{\varepsilon}%
}\frac{1}{t}\int_{0}^{t}\parallel A^{\frac{1}{2}}u_{\varepsilon}\left(
\tau,x\right)  \parallel^{2}d\tau)^{\frac{7}{8}} \label{50}%
\end{equation}
On the other hand, using $($\ref{42}$)$ we have%
\begin{equation}
\lim\sup_{t\rightarrow\infty}\sup_{u_{\varepsilon0}\in\mathfrak{A}%
_{\varepsilon}}\frac{1}{t}\int_{0}^{t}\parallel A^{\frac{1}{2}}u_{\varepsilon
}\left(  \tau,x\right)  \parallel^{\frac{7}{4}}d\tau\leq(\frac{\epsilon
}{\lambda_{1}^{\frac{3}{2}}\nu})^{\frac{7}{8}}. \label{51}%
\end{equation}
For $u_{\varepsilon0}\in\mathfrak{A}_{\varepsilon}$, we can estimate the
quantities $q_{m}\left(  t\right)  $, $q_{m}$\
\begin{equation}
q_{m}=\lim\sup_{t\rightarrow\infty}q_{m}\left(  t\right)  \leq-\kappa
_{1}m^{\frac{5}{3}}+\kappa_{2}, \label{52}%
\end{equation}
where%
\begin{equation}
\kappa_{1}=c_{7}c_{9}\lambda_{1}\text{ and\ }\kappa_{2}=c_{5}c_{10}%
(\frac{\epsilon}{\lambda_{1}^{\frac{3}{2}}\nu})^{\frac{7}{8}}. \label{62}%
\end{equation}
Therefore, if $m^{\prime}\in%
\mathbb{N}
$ is defined by
\begin{equation}
m^{\prime}-1<(\frac{2\kappa_{2}}{\kappa_{1}})^{\frac{3}{5}}=(\frac
{2c_{5}c_{10}}{c_{7}c_{9}\lambda_{1}^{\frac{37}{16}}\nu^{\frac{7}{8}}}%
)^{\frac{3}{5}}\epsilon^{\frac{21}{40}}<m^{\prime}, \label{53}%
\end{equation}
Setting $l_{\epsilon}=(\frac{\nu^{3}}{\epsilon})^{\frac{1}{4}}$ the
dissipation length scale, and $l_{0}=\lambda_{1}^{-\frac{1}{2}}$ the
macroscopical length by setting.\ Then we can rewrite $($\ref{53}$)$ in the
form%
\begin{equation}
m^{\prime}-1<c_{_{11}}(\frac{l_{0}}{l_{\epsilon}})^{\frac{21}{10}}<m^{\prime}
\label{54}%
\end{equation}
where
\begin{equation}
c_{_{11}}=(\frac{2c_{5}c_{10}}{c_{7}c_{9}\lambda_{1}^{\frac{37}{16}}\nu
^{\frac{7}{8}}})^{\frac{3}{5}}\text{ }(\nu^{^{\frac{63}{40}}})\lambda
_{1}^{\frac{21}{20}}. \label{55}%
\end{equation}
Thus, we have proved the following Proposition

\begin{proposition}
The Hausdorff and fractal dimensions of the global attractor $\mathfrak{A}%
_{\varepsilon}$\ of the regularized 3D Navier-Stokes $($\ref{13}$)$, $\dim
_{F}\left(  \mathfrak{A}_{\varepsilon}\right)  $ and $\dim_{H}\left(
\mathfrak{A}_{\varepsilon}\right)  $ respectively, satisfy%
\begin{equation}
\dim_{H}\left(  \mathfrak{A}_{\varepsilon}\right)  \leq\dim_{F}\left(
\mathfrak{A}_{\varepsilon}\right)  \leq c_{_{11}}(\frac{l_{0}}{l_{\epsilon}%
})^{\frac{21}{10}}. \label{56}%
\end{equation}

\end{proposition}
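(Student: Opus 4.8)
The plan is to derive \eqref{56} by feeding the trace estimate \eqref{52} into the abstract Hausdorff/fractal dimension criterion quoted above as \eqref{34}--\eqref{35}. Its hypotheses — compactness of $\mathfrak{A}_{\varepsilon}$, and uniform Fr\'echet differentiability of $S_{\varepsilon}(t)$ on $\mathfrak{A}_{\varepsilon}$ with differential the solution operator of the linearized system \eqref{24} — are supplied by the absorbing-set construction of Section 4 and by Theorem \ref{Theorem6}. The criterion then reduces the claim to producing one integer $m$ with $q_m<0$: \eqref{34} gives $\dim_H(\mathfrak{A}_{\varepsilon})\le m$, and \eqref{35} gives $\dim_F(\mathfrak{A}_{\varepsilon})\le m\bigl(1+\max_{1\le j\le m-1}(q_j)_+/|q_m|\bigr)$. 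Since $\dim_H\le\dim_F$ always, the entire content of \eqref{56} is the bound on $\dim_F$.

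First I would invoke \eqref{52}, that is $q_m\le-\kappa_1 m^{5/3}+\kappa_2$ with $\kappa_1,\kappa_2$ as in \eqref{62}; this is the step already carrying all the substance, via the Lieb--Thirring inequality \eqref{41} for the quadratic form \eqref{40}, the Sobolev embedding $V_2\subset V_1$, the H\"older splittings \eqref{58} and \eqref{48}, the spectral lower bound \eqref{47}, and the control \eqref{51} of the time average of $\parallel A^{1/2}u_{\varepsilon}\parallel$ through Kolmogorov's dissipation rate \eqref{42}--\eqref{44} and the dissipation bound \eqref{20}. Then I take the integer $m'$ defined by \eqref{53}, so that $(m')^{5/3}>2\kappa_2/\kappa_1$ and hence
\[
q_{m'}\le -\kappa_1(m')^{5/3}+\kappa_2 < -2\kappa_2+\kappa_2 = -\kappa_2 < 0.
\]
Therefore $q_{m'}<0$, and \eqref{34} gives $\dim_H(\mathfrak{A}_{\varepsilon})\le m'$.

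For the fractal dimension I would bound the correction factor in \eqref{35}: \eqref{52} gives $(q_j)_+\le\kappa_2$ for every $j\ge1$, while the display above gives $|q_{m'}|>\kappa_2$, so $\max_{1\le j\le m'-1}(q_j)_+/|q_{m'}|<1$ and \eqref{35} yields $\dim_F(\mathfrak{A}_{\varepsilon})\le 2m'$. It remains only to rewrite the bound physically: combining \eqref{53} with \eqref{54}--\eqref{55}, $m'\le 1+c_{11}(l_0/l_{\epsilon})^{21/10}$ with $l_0=\lambda_1^{-1/2}$ and $l_{\epsilon}=(\nu^{3}/\epsilon)^{1/4}$; using $l_0/l_{\epsilon}\gtrsim 1$ in the turbulent regime, the additive constant and the factor $2$ are absorbed into a (possibly enlarged) scale-invariant constant still denoted $c_{11}$, which gives \eqref{56}.

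The genuinely delicate work is upstream, in the derivation of \eqref{45}--\eqref{55}: keeping every constant $c_5,\dots,c_{11}$ dimensionless and dependent only on the shape of $\Omega$ — which is what forces the conjugate exponents $p=7/3$, $q=7/4$ and the choice $\sigma=7\varepsilon/(6\kappa)$ in \eqref{59}, and the use of $\lambda_j\ge c_8 L^{-2}j^{2/3}$ in \eqref{47} — and tracking how $(2\kappa_2/\kappa_1)^{3/5}$ becomes the Landau--Lifschitz power $(l_0/l_{\epsilon})^{21/10}$, whose exponent $21/10$ comes from the factor $\epsilon^{21/40}$ in \eqref{53} together with $l_{\epsilon}=(\nu^{3}/\epsilon)^{1/4}$. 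Within the proof of the Proposition itself, the only point demanding care is the verification $q_{m'}<0$ with the constants of \eqref{62}; granting the upstream estimates, there is no remaining obstacle.
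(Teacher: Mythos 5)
Your proposal is correct and follows essentially the same route as the paper: the trace bound \eqref{52} obtained from the Lieb--Thirring inequality, the choice of $m'$ via \eqref{53}, and the rewriting \eqref{54}--\eqref{55} in terms of $l_0/l_\epsilon$. You are in fact slightly more careful than the text, which omits the explicit verification that $q_{m'}<0$ and the bound $\max_j (q_j)_+/|q_{m'}|<1$ needed to control the fractal-dimension correction factor in \eqref{35} (and hence the factor $2$ absorbed into $c_{11}$); these are exactly the right finishing steps.
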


The exponent on $\frac{l_{0}}{l_{\epsilon}}$ is significantly less than the
Landau--Lifschitz predicted value of $3$, less than the results in \cite{9}
for the 3D Camassa--Holm equations, or simply NS-$\alpha$ model and less than
the Avrin exponent (for $\alpha=l=2$) \cite[Theorem 1]{1}.

This, in a sense, suggests that in the absence of boundary effects (e.g., in
the case of periodic boundary conditions) the modified 3D Navier-Stokes
represent, very well, the averaged equation of motion of turbulent flows.

Since the Grashoff number $G=\dfrac{\left\Vert f\right\Vert }{\nu^{2}%
\lambda_{1}^{\frac{3}{4}}}$ in 3D, (see e.g. \cite{1,11, 33}) is an upper
bound for $(\frac{l_{0}}{l_{\epsilon}})^{2}$, expressing the above estimates
in terms of $G$ is straightforward. The above Proposition becomes

\begin{proposition}
The Hausdorff and fractal dimensions of the global attractor $\mathfrak{A}%
_{\varepsilon}$\ of the regularized 3D Navier-Stokes $($\ref{13}$)$, $\dim
_{F}\left(  \mathfrak{A}_{\varepsilon}\right)  $ and $\dim_{H}\left(
\mathfrak{A}_{\varepsilon}\right)  $ respectively, satisfy%
\begin{equation}
\dim_{H}\left(  \mathfrak{A}_{\varepsilon}\right)  \leq\dim_{F}\left(
\mathfrak{A}_{\varepsilon}\right)  \leq c_{_{11}}G^{\frac{21}{20}}\text{.}
\label{57}%
\end{equation}

\end{proposition}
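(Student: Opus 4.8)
The plan is to derive this proposition directly from the preceding one by rewriting the scale-invariant ratio $l_{0}/l_{\epsilon}$ in terms of the Grashoff number $G$; no new dynamical input is needed, since all the analytic work (differentiability of $S_{\varepsilon}(t)$, the trace estimate, the Lieb--Thirring bound, and the spectral lower bound $\lambda_{j}\geq c_{8}L^{-2}j^{2/3}$) has already been carried out to produce the bound $\dim_{H}(\mathfrak{A}_{\varepsilon})\leq\dim_{F}(\mathfrak{A}_{\varepsilon})\leq c_{11}(l_{0}/l_{\epsilon})^{21/10}$ in \eqref{56}.

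First I would recall the definitions $l_{0}=\lambda_{1}^{-1/2}$ and $l_{\epsilon}=(\nu^{3}/\epsilon)^{1/4}$, so that
\[
\Bigl(\frac{l_{0}}{l_{\epsilon}}\Bigr)^{2}=\frac{l_{0}^{2}}{l_{\epsilon}^{2}}=\frac{\lambda_{1}^{-1}}{(\nu^{3}/\epsilon)^{1/2}}=\lambda_{1}^{-1}\nu^{-3/2}\epsilon^{1/2}.
\]
Then I would insert the estimate \eqref{44} for the energy dissipation flux, $\epsilon\leq\lambda_{1}^{2}\nu^{3}G^{2}$, which gives $\epsilon^{1/2}\leq\lambda_{1}\nu^{3/2}G$ and hence
\[
\Bigl(\frac{l_{0}}{l_{\epsilon}}\Bigr)^{2}\leq\lambda_{1}^{-1}\nu^{-3/2}\cdot\lambda_{1}\nu^{3/2}G=G,
\]
i.e. $G$ is an upper bound for $(l_{0}/l_{\epsilon})^{2}$, exactly as asserted in the text.

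Finally, raising this inequality to the power $21/20$ yields $(l_{0}/l_{\epsilon})^{21/10}=\bigl((l_{0}/l_{\epsilon})^{2}\bigr)^{21/20}\leq G^{21/20}$, and substituting into \eqref{56} gives
\[
\dim_{H}(\mathfrak{A}_{\varepsilon})\leq\dim_{F}(\mathfrak{A}_{\varepsilon})\leq c_{11}\Bigl(\frac{l_{0}}{l_{\epsilon}}\Bigr)^{21/10}\leq c_{11}G^{21/20},
\]
which is \eqref{57}; the left inequality $\dim_{H}\leq\dim_{F}$ is the general relation between Hausdorff and fractal dimension and needs no proof. The only thing requiring care here is bookkeeping of exponents and the consistency of the length-scale definitions (in particular using $l_{\epsilon}=(\nu^{3}/\epsilon)^{1/4}$ as in \eqref{54}); since the constant $c_{11}$ in \eqref{55} already carries the dimensional factors of $\nu$ and $\lambda_{1}$, the resulting bound \eqref{57} is genuinely dimensionless, and I would remark that it is independent of $\varepsilon$ because the radius $R_{1}$ entering $c_{11}$ and the dissipation bound \eqref{20} are $\varepsilon$-independent.
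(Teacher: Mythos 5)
Your argument is correct and is exactly the paper's route: the paper disposes of this proposition in one line by asserting that $G$ bounds $(l_{0}/l_{\epsilon})^{2}$ and substituting into \eqref{56}, and you have simply filled in that assertion via \eqref{44} (correctly using $l_{\epsilon}=(\nu^{3}/\epsilon)^{1/4}$ as in \eqref{54}, not the misprinted $\nu^{3}/\epsilon$ earlier in Section 6). Only your closing side remark is off: $c_{11}$ is not actually independent of $\varepsilon$, since $c_{5}\propto\varepsilon^{-3/4}$ and $c_{7}$ contains $\varepsilon$, but this does not affect the validity of the stated bound.
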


Thus we recover the improvement on the cubic power, i.e. get a bound
proportional to $G^{\frac{p}{2}}$ for $p<3$, in $($\ref{57}$)$ $p=\frac
{21}{10}$. This improvement suggesting to very good agreement with the
conventional theory of turbulence.

For $\alpha=l=2$, motivated by the Chapman--Enskog expansion, we recover
$($\ref{57}$)$. This result can be seen as an improved version of the results
announced by Joel\ Avrin \cite[Theorem 2]{1}.

This upper bound is much smaller than what one would expect for
three-dimensional models, i.e. $(\frac{l_{0}}{l_{\epsilon}})^{3}$. This
improves significantly on previous bounds have demonstrated that
hyperviscosity can have profound effects on the number of degree freedom. The
modifying effects are well understood, which makes the use of hyperviscosity
an efficient tool for numerical studies and suggests that the regularized 3D
Navier-Stokes has a great potential to become a good sub-gridscale large-eddy
simulation model of turbulence. The results obtained agree very well with
those provided in numerical studies of turbulence(see, Refs.,\cite{1}%
,\cite{9}, \cite{13}, \cite{15}, \cite{21}).

The present results explain some fundamental differences between the theory
use instead a hyper-viscous term to approximate Navier-Stokes equations and
which hyperviscous terms are added spectrally to the standard incompressible
Navier--Stokes equations \cite{1}. It would be interesting to obtain estimates
for $($\ref{1}$)$ in this context in 3D and to see how the estimates depend on
$l$ for $l\geq\frac{3}{2}$.

\end{document}